\journal{X}
\begin{document}
%\linenumbers
\begin{frontmatter}

\title{Two-sided Riemannian optimization model order reduction for linear systems with quadratic outputs} %% Article title

\author{Xiaolong Wang} %% Author name
\author{Chenglong Liu}
\author{Tongtu Tian}
%% Author affiliation
\affiliation{organization={School of Mathematics and Statistics},%Department and Organization
            addressline={Northwestern Polytechnical University}, % Address line
            city={Xi'an},
            postcode={710129}, 
            state={Shaanxi},
            country={China}}
%% Abstract
\begin{abstract}
%% Text of abstract
This paper investigates structure-preserving $H_2$-optimal model order reduction 
(MOR) for linear systems with quadratic outputs.
Within a Petrov-Galerkin projection framework, the $H_2$-optimal MOR problem is first formulated as an optimization problem on the Grassmann manifold, for which a corresponding bivariable alternating optimization algorithm is proposed.
Furthermore, to explicitly guarantee the asymptotic stability of the reduced-order 
model, a second approach is introduced by imposing specific constraints on the 
projection matrices. We reformulate the problem as a novel optimization task on the 
Stiefel manifold and construct a corresponding solution algorithm.
The computational bottleneck in both iterative methods is addressed by developing 
an approximate solver for Sylvester equations based on orthogonal polynomial 
expansions, which significantly enhances the overall efficiency.
Numerical experiments validate that the obtained reduced models provide significant 
advantages in approximation accuracy and computational efficiency. 
\end{abstract}
%% Keywords
\begin{keyword}
%% keywords here, in the form: keyword \sep keyword
linear system with quadratic output\sep 
Riemannian optimization \sep model order reduction  \sep Sylvester equation
%% PACS codes here, in the form: \PACS code \sep code
%% MSC codes here, in the form: \MSC code \sep code
%% or \MSC[2008] code \sep code (2000 is the default)
\end{keyword}
\end{frontmatter}

%% Add \usepackage{lineno} before \begin{document} and uncomment 
%% following line to enable line numbers
%% \linenumbers

%% main text
%%

%% Use \section commands to start a section

\section{Introduction}
\label{section1}

Mathematical modeling of large scale complex systems is indispensable in contemporary fields like aerospace engineering, automatic control, computational fluid dynamics, and signal processing.
However, the high dimensionality of these models often presents challenges such as prohibitive computational complexity, extensive memory demands, and difficulties in satisfying real-time performance requirements, significantly impeding their engineering applications.
Model Order Reduction (MOR) techniques offer a potent solution to these issues.
MOR aims to construct low-dimensional approximate models that drastically reduce computational and storage burdens while preserving the essential dynamics and input-output characteristics of the original high-fidelity systems, thereby improving the tractability and efficiency of complex system analysis and design.
Over time, MOR has matured into a robust theoretical and applied discipline, with 
well  established methods such as balanced truncation, Krylov subspace 
techniques, and $H_2$-optimal approaches \cite{Benner2017, Antoulas2020}.
\par

The inherent structure of many dynamical systems carries significant physical meaning.
Consequently, structure-preserving MOR for systems with diverse characteristics, 
including negative imaginary, port-Hamiltonian and coupled systems, has become 
an active research focus.
Specifically, when observed quantities represent variances or state deviations relative to a reference, the system's output equation naturally assumes a quadratic structure, defining what are known as systems with quadratic outputs.
An established body of literature addresses MOR for linear dynamical systems with quadratic outputs (LQO systems).
A common intuitive strategy involves linearizing the system's output equation, thereby recasting the LQO system as an equivalent linear multi-output system.
This allows the application of standard MOR techniques for multi-output linear systems, such as Balanced Truncation \cite{VanBeeumen2010} and Krylov subspace methods \cite{Benner2015}.
However, this linearization often yields a linear system with a high-dimensional output space, potentially incurring significant computational expense.
An alternative innovative method, proposed in \cite{Pulch2019}, constructs a companion Quadratic Bilinear (QB) system.
The linear output of this QB system precisely replicates the quadratic output of the original LQO system.
Model order reduction is subsequently performed by applying balanced truncation to this QB system.
This process, however, ultimately yields a reduced-order QB system.
Direct reconstruction of the desired reduced-order LQO system from this resultant low-order QB system is often infeasible.
Reference \cite{Benner2021} defines Gramians and the $H_2$-norm for LQO systems.
Based on these definitions, \cite{Benner2021} also proposes a structure-preserving balanced truncation method.
Reference \cite{Gosea2019} extends the Iterative Rational Krylov Algorithm (IRKA) \cite{Gugercin2008} to LQO systems, resulting in an iterative interpolation-based MOR technique.
Subsequently, \cite{Gosea2022}, building on \cite{Gosea2019}, adapts the Adaptive 
Antoulas-Anderson algorithm to develop a data-driven modeling framework for LQO 
systems.
\par

Over the past few decades, $H_2$ model reduction methods have been extensively applied to structure-preserving MOR problems.
Riemannian optimization techniques have emerged as a highly effective approach for addressing $H_2$ optimal MOR problems.
In the context of $H_2$ optimal MOR for general dynamical systems, \cite{Sato2016} proposed a convergent algorithm based on product manifold geometry, while \cite{Zeng2015} introduced a bilateral iterative algorithm utilizing Grassmann manifolds.
For bilinear systems, \cite{Xu2019} and \cite{Xu2015} established $H_2$ optimal MOR algorithms on the Stiefel manifold and Grassmann manifold, respectively.
Furthermore, Riemannian optimization algorithms have also been developed for dynamical systems with other specific structures, such as coupled systems \cite{Yang2017,Sato2017,Yu2020,Wang2018,Jiang2019,Xu2022,Li2022}.
Additionally, several publications specifically explore the theoretical aspects of 
constructing iterative algorithms on Riemannian manifolds and their corresponding 
convergence analysis \cite{Ring2012,Sato2015,Sato2016a}.
\par

This paper investigates the structure-preserving $H_2$ optimal MOR problem for LQO systems.
Two primary approaches for constructing the coefficient matrices of reduced-order LQO systems are considered.
The first method involves constructing a two-variable alternating optimization algorithm on the Grassmann manifold $\mathrm{Gr}(n,r)$.
The core idea of this algorithm is to decompose the optimization problem into two 
sub-problems within each iteration, progressively approaching a locally optimal 
solution by alternately optimizing one variable while keeping the other fixed.
The second method imposes constraints on the projection transformation matrices, which theoretically ensures the stability of the reduced-order system.
Leveraging these strategies, the $H_2$ optimal MOR problem is reformulated as two distinct Riemannian optimization problems: one on the Grassmann manifold and the other on the Stiefel manifold.
Building on this, a bivariable alternating optimization algorithm and a Riemannian conjugate gradient model reduction method are established to generate reduced-order systems for the original LQO system.
A key challenge in the Riemannian conjugate gradient algorithm is the computational cost associated with computing the cost function's Riemannian gradient, which necessitates solving several Sylvester equations during each iteration.
As proposed in our previous paper, an efficient algorithm founded on Gramian approximation significantly enhances computational efficiency by circumventing the computationally intensive direct solution of Sylvester equations.
\par

The remainder of this paper is organized as follows.
In \cref{section2}, the state-space representation of the LQO system is presented, and its associated Gramians and $H_2$-norm are reviewed.
The structure-preserving $H_2$-optimal MOR problem for LQO systems is established in \cref{section3}.
In this section, the concept of the Grassmann manifold is introduced, allowing the problem to be reformulated as an unconstrained optimization task.
Building on this formulation, the necessary components for a Riemannian optimization framework are derived, and a corresponding iterative algorithm is proposed.
A fast algorithm designed to enhance the computational efficiency of the proposed methods is detailed in \cref{section4}.
Numerical simulation is presented in \cref{section5} to validate the effectiveness 
of the developed algorithms.
Finally, concluding remarks are provided in \cref{section6}.

\section{Problem statement}
\label{section2}

This section first presents a precise formulation of LQO systems.
Subsequently, objectives pertinent to MOR are reviewed, encompassing the Gramians 
and the $H_2$ norm of such systems.
Further details on LQO systems can be found in \cite{Gosea2019,Benner2021}.
\par

In this paper, we consider a linear time invariant system with a quadratic output, represented by the tuple of constant matrices $\Sigma = (A, B, C, M)$.
The system is described by the state-space equations
\begin{equation}
	{\Sigma}:\left\{\begin{array}{l}\dot{x}(t)=Ax(t)+Bu(t),\\y(t)=Cx(t)+x(t)^{\top}Mx(t),\end{array}\right.
\label{2.1}
\end{equation}
where $x(t) \in \mathbb{R}^n$ is the state vector, $u(t) \in \mathbb{R}^m$ is the input vector, and $y(t) \in \mathbb{R}$ is the scalar output;
these are defined for $t \in \left[0, t_{\mathrm{end}}\right]$.
The dimension of the matrices are $A\in\mathbb{R}^{n\times n}$, 
$B\in\mathbb{R}^{n\times m}$, $C\in\mathbb{R}^{1\times n}$ and 
$M\in\mathbb{R}^{n\times n}$.
It is assumed that the number of inputs is smaller than the state dimension (i.e., 
$m \ll n$).
Furthermore, the matrix $A$ is assumed to be Hurwitz, that is, all eigenvalues of 
$A$ have negative real parts, which ensures the asymptotic stability of the LQO 
systems. Throughout this paper, we assume that $M \in \mathbb{S}_n$, where 
$\mathbb{S}_n$ 
denotes the set of real symmetric matrices of size $n \times n$.
This is justified because for any matrix $M \in \mathbb{R}^{n \times n}$ and $x \in 
\mathbb{R}^n$, the quadratic form $x^{\top}Mx$ is equivalent to 
$\frac{1}{2}(x^{\top}Mx + x^{\top}M^{\top}x)$.
Note that system \eqref{2.1} is a multi-input single-output (MISO) system.
While LQO systems can be of the multi-input multi-output, the scope of this paper is 
focused on the MISO case.
\par

The primary objective is to construct a reduced-order model (ROM), denoted as 
$\hat{\Sigma} = (\hat{A}, \hat{B}, \hat{C}, \hat{M})$, of order $r$ ($r \ll n$), 
that 
approximates the original system $\Sigma$.
This ROM preserves the structural characteristics of \eqref{2.1} and is described by 
the following state-space representations
\begin{equation}
	\hat{\Sigma}:\left\{\begin{array}{l}\dot{\hat{x}}(t)=\hat{A} \hat{x}(t)+\hat{B} u(t), \\\hat{y}(t)=\hat{C}\hat{x}(t)+\hat{x}(t)^{\top}\hat{M}\hat{x}(t),\end{array}\right.
    \label{2.2}
\end{equation}
where $\hat{x}(t) \in \mathbb{R}^r$ is the reduced state vector, $u(t) \in \mathbb{R}^m$ is the input vector, and $\hat{y}(t) \in \mathbb{R}$ is the output.
The ROM matrices are defined as $\hat{A} \in \mathbb{R}^{r \times r}$, $\hat{B} \in \mathbb{R}^{r \times m}$, $\hat{C} \in \mathbb{R}^{1 \times r}$, and $\hat{M} \in \mathbb{S}_r$.
The ROM is to be constructed such that its output $\hat{y}(t)$ closely approximates the output $y(t)$ of the original system $\Sigma$ for all admissible inputs $u(t)$.
We aim to ensure two crucial properties of the ROM: its stability and the quadratic output structure.
\par

The state dynamics of system \eqref{2.1}, given by the equation $\dot{x}(t) = Ax(t) + Bu(t)$, are linear and time-invariant (LTI).
Consequently, the controllability Gramian $P$ for this system, which is determined by the LTI component $(A,B)$, is defined by the standard LTI system theory
\begin{equation*}
    P = \int_0^\infty e^{A\tau}BB^\top e^{A^\top\tau}\mathrm{d}\tau.
\end{equation*}
Given the prior assumption that the matrix $A$ is Hurwitz, this Gramian $P$ is the unique, symmetric, positive semidefinite solution to the algebraic Lyapunov equation
\begin{equation}\label{2.4}
    AP+PA^\top+BB^\top=0.
\end{equation}
\par

Building upon the preceding discussion, we associate Gramians with the distinct terms of the output equation in system \eqref{2.1}.
For the linear term $Cx(t)$, the standard observability Gramian $Q_1$, pertinent to the LTI pair $(A,C)$, is defined as
\begin{equation*}
    Q_1 = \int_0^\infty e^{A^\top\sigma}C^\top C e^{A\sigma}\mathrm{d}\sigma.
\end{equation*}
For the quadratic term $x(t)^\top Mx(t)$, a specialized Gramian $Q_2$ is introduced
\begin{equation*}
    Q_2 = \int_0^\infty\int_0^\infty
    e^{A^\top\sigma_1}Me^{A\sigma_2}B\left(e^{A^\top\sigma_1}Me^{A\sigma_2}B\right)^\top
    \mathrm{d}\sigma_1\mathrm{d}\sigma_2.
\end{equation*}
The Gramian $Q_2$ defined is referred to as the quadratic-output (QO) observability 
Gramian.
A total observability Gramian for the LQO system is then given by $Q = Q_1+Q_2$.
This composite Gramian $Q$ is the unique symmetric positive semidefinite solution to the generalized algebraic Lyapunov equation
\begin{equation*}
    A^\top Q+QA+C^\top C+MPM=0,
\end{equation*}
where $P$ is the controllability Gramian defined in \eqref{2.4}.
\par

The $H_2$ norm of the LQO system \eqref{2.1} is defined via its Volterra kernels as
\begin{equation}\label{2.8}
    \|\Sigma\|_{H_2} = \left(\int_0^\infty\|h_1(\sigma)\|_2^2\mathrm{d}\sigma+\int_0^\infty\int_0^\infty\|h_2(\sigma_1,\sigma_2)\|_2^2\mathrm{d}\sigma_1\mathrm{d}\sigma_2\right)^{\frac12},\nonumber
\end{equation}
where $h_1(\sigma)=Ce^{A\sigma}B$ and $h_2(\sigma_{1},\sigma_{2})=\mathrm{vec}\left(B^{\top}e^{A^{\top}\sigma_{1}}Me^{A\sigma_{2}}B\right)^{\top}$ are the linear and quadratic kernels, respectively.
Furthermore, the $H_2$ norm of $\Sigma$ can also be expressed in terms of the generalized observability Gramian $Q$ as \cite{Reiter2025}
\begin{equation}\label{2.9}
    \|\Sigma\|_{H_{2}} = \sqrt{\mathrm{tr}\left(B^{\top}QB\right)} = \sqrt{\mathrm{tr}\left(CPC^{\top}\right)+\mathrm{tr}\left(PMPM\right)}.\nonumber
\end{equation}

Consider the original system $\Sigma$ and a reduced-order system $\hat{\Sigma}$ when subjected to the same input $u(t)$.
An important relationship links the $L_{\infty}$ norm of the output error $y(t)-\hat{y}(t)$ to the $H_2$ norm of the error system $\Sigma_e := \Sigma-\hat{\Sigma}$ and the $L_2$ norm of the input \cite{Reiter2025}
\begin{equation}\label{2.10}
    \|y-\hat{y}\|_{L_\infty} = \sup_{t\geq0}|y(t)-\hat{y}(t)| \leq \|\Sigma-\hat{\Sigma}\|_{H_2} (\|u\|_{L_2}+\|u\otimes u\|_{L_2}).\nonumber
\end{equation}
This error bound highlights that minimizing $\|\Sigma-\hat{\Sigma}\|_{H_2}$ is a primary objective in model reduction.
A sufficiently small $H_2$ norm of the error system provides a guarantee for a small $L_\infty$ output error, given an input with finite $L_2$ norm.

\section{$H_2$ optimal MOR for LQO systems on Riemannian manifolds}
\label{section3}

This section investigates MOR problem for LQO systems under the Petrov-Galerkin 
projection framework.
Two novel Riemannian optimization methods are designed to address this problem.
First, we introduce the ROM for LQO systems within the Petrov-Galerkin projection 
setting. Then the $H_2$ optimal MOR is transformed into a Riemannian optimization 
problem, and an iterative algorithm is proposed.
Furthermore, we refine this algorithm by imposing constraints on the projection 
transformation matrices to actively preserve the stability of the reduced-order 
system, leading to a new Riemannian optimization algorithm.
%Finally, to enhance computational efficiency, we devise a fast algorithm for solving large-scale matrix equations.
\par

%------------------------------------------------------------------------------%
\subsection{$H_2$ optimal MOR problem}\label{section3.1}
%------------------------------------------------------------------------------%
Building upon the framework presented in Section~\ref{section2}, this subsection formulates the MOR problem for LQO systems as an $H_2$-norm optimization problem.
\par
The objective is to minimize the $H_2$ norm of the error system, $\|\Sigma-\hat{\Sigma}\|_{H_2}$, with respect to the reduced-order model parameters $(\hat{A},\hat{B},\hat{C},\hat{M})$, subject to the constraint that the reduced system matrix $\hat{A}$ is Hurwitz.
To this end, we define the cost function $J$ as the squared $H_2$ norm of the error system
\begin{equation}
    J(\hat{A},\hat{B},\hat{C},\hat{M}) = \|\Sigma-\hat{\Sigma}\|_{H_2}^2.\nonumber
\end{equation}
The error system $\Sigma_e = \Sigma-\hat{\Sigma}$, representing the difference between the full-order model \eqref{2.1} and the reduced-order model \eqref{2.2}, admits a state-space realization with matrices $(A_e,B_e,C_e,M_e)$ given by
\begin{equation}\label{3.1.1}
    (A_e,B_e,C_e,M_e) =
    \left(
    \begin{pmatrix}
        A & 0 \\
        0 & \hat{A}
    \end{pmatrix},
    \begin{pmatrix}
        B \\ \hat{B}
    \end{pmatrix},
    \begin{pmatrix}
        C & -\hat{C}
    \end{pmatrix},
    \begin{pmatrix}
        M & 0 \\
        0 & -\hat{M}
    \end{pmatrix}
    \right).
\end{equation}
It is evident that the output $y_e(t)$ of the error system $\Sigma_e$ described by \eqref{3.1.1} satisfies $y_e(t) = y(t)-\hat{y}(t)$.
Since the matrix $M$ of the original system is assumed to be symmetric, the matrix $\hat{M}$ of the reduced-order model should also be symmetric, i.e., $\hat{M} \in \mathbb{S}_r$.
The controllability Gramian $P_e$ and the generalized observability Gramian $Q_e$ for the error system $\Sigma_e$ are the unique solutions to the Lyapunov equations
\begin{align}
    A_eP_e+P_eA_e^\top+B_eB_e^\top&=0,\label{3.1.2}\\
    A_e^\top Q_e+Q_eA_e+C_e^\top C_e+M_eP_eM_e&=0,\label{3.1.3}.
\end{align}
Note that $A_e$ is Hurwitz is ensured by $A$ and $\hat{A}$ being Hurwitz.
To further analyze the $H_2$ norm of the error system, the Gramians $P_e$ and $Q_e$ are partitioned conformably with the block structure of $A_e$
\begin{equation}\label{3.1.4}
    P_e=
    \begin{pmatrix}
        P&X\\
        X^\top&\hat{P}
    \end{pmatrix},\quad
    Q_e=
    \begin{pmatrix}
        Q&Y\\
        Y^\top&\hat{Q}
    \end{pmatrix}.
\end{equation}
Here, $P$ and $Q$ are the Gramians of the full-order system $\Sigma$, while $\hat{P}$ and $\hat{Q}$ are the corresponding Gramians for the reduced-order model $\hat{\Sigma}$.
The matrices $X$ and $Y$ represent cross-coupling terms.
Substituting \eqref{3.1.1} and \eqref{3.1.4} into the Lyapunov equations \eqref{3.1.2} and \eqref{3.1.3} yields the following set of coupled matrix equations
\begin{align}
    AX+X{\hat{A}^\top}+B{\hat{B}^\top}&=0,\label{3.1.5}\\
    \hat{A}\hat{P}+\hat{P}{\hat{A}^\top}+\hat{B}{\hat{B}^\top}&=0,\label{3.1.6}\\
    A^\top Y+Y\hat{A}-C^\top\hat{C}-MX\hat{M}&=0,\label{3.1.7}\\
    {\hat{A}^\top}\hat{Q}+\hat{Q}\hat{A}+\hat{C}^\top\hat{C}+\hat{M}\hat{P}\hat{M}&=0\label{3.1.8}.
\end{align}
Using the partitioned form of $Q_e$ and the structure of $B_e$, the cost function $J(\hat{A},\hat{B},\hat{C},\hat{M})$ can be expressed as
\begin{equation}\label{3.1.9}
    J(\hat{A},\hat{B},\hat{C},\hat{M})=\mathrm{tr}\left(B_e^\top Q_eB_e\right)=\mathrm{tr}\left(B^\top QB+2B^\top Y\hat{B}+\hat{B}^\top\hat{Q}\hat{B}\right),
\end{equation}
or equivalently, using the partitioned form of $P_e$ and the structure of $C_e$, the cost function can also be expressed as
\begin{align}
    J(\hat{A}, \hat{B}, \hat{C}, \hat{M}) ={}& \mathrm{tr}\left(C_eP_eC_e^{\top}\right)+\mathrm{tr}\left(P_eM_eP_eM_e\right) \nonumber \\
    & = \mathrm{tr}(CPC^\top - 2CX\hat{C}^\top + \hat{C}\hat{P}\hat{C}^\top) + \mathrm{tr}(PMPM - 2X^\top M X \hat{M} + \hat{P}\hat{M}\hat{P}\hat{M}),
    \label{3.1.10}
\end{align}
where $X, Y, \hat{P}, \hat{Q}$ are solutions to the Sylvester equationss above.
The $H_2$-optimal model reduction problem is thus formulated as the following constrained optimization problem
\begin{equation}
    \min_{\substack{\hat{A}\in\mathbb{R}^{r\times r},\,\hat{A}\text{ is Hurwitz}\\\hat{B}\in\mathbb{R}^{r\times m},\,\hat{C}\in\mathbb{R}^{1\times r}\\\hat{M}\in \mathbb{S}_r}}J(\hat{A},\hat{B},\hat{C},\hat{M}).
    \label{3.1.11}
\end{equation}
The subsequent step involves reformulating problem \eqref{3.1.11} as an unconstrained optimization problem on a Riemannian manifold by endowing its constraint set with a suitable manifold structure.

\subsection{Optimization based on Grassmann manifolds}\label{section3.2}
This subsection introduces the Grassmann manifold, which allows the $H_2$-optimal MOR problem to be reformulated as an unconstrained optimization problem on this geometric space.
Relevant geometric definitions are then provided.
For a comprehensive treatment of matrix manifolds, see \cite{Absil2008,Boumal2023}.
\par
Within the Petrov-Galerkin projection framework, a reduced-order LQO model \eqref{2.2} is constructed using the following parameter matrix definitions:
\begin{equation}
    \hat{A} = W^\top AV,\quad \hat{B} = W^\top B,\quad \hat{C} = CV,\quad \hat{M} = V^\top MV.\nonumber
    \label{3.2.1}
\end{equation}
The projection matrices $W, V \in \mathbb{R}^{n \times r}$ should satisfy the biorthogonality constraint $W^\top V = I_r$, where $r$ is the dimension of the reduced-order system.
Substituting these definitions into the cost function \eqref{3.1.9}, the $H_2$-optimal model order reduction problem for the LQO system can be formulated as the following constrained optimization problem
\begin{equation}
    \min_{\substack{W,V \in \mathbb{R}^{n \times r} \\ W^\top V = I_r}} \{J_1(W,V) = J(W^\top AV, W^\top B, CV, V^\top MV)\}.
    \label{3.2.2}
\end{equation}
\par

The noncompact Stiefel manifold, denoted by $\mathbb{R}_*^{n \times r}$, represents the set of all $n \times r$ matrices with full column rank
\begin{equation*}
    \mathbb{R}_*^{n \times r} = \{U \in \mathbb{R}^{n \times r} \mid \mathrm{rank}(U) = r \}.
    \label{3.2.3}
\end{equation*}
For problem \eqref{3.2.2}, the biorthogonality constraint $W^\top V = I_r$ implicitly requires that the transformation matrices $W$ and $V$ have full column rank, since $r = \mathrm{rank}(I_r) = \mathrm{rank}(W^\top V) \leq \min(\mathrm{rank}(W^\top)$, $\mathrm{rank}(V)) \leq r$.
On this basis, problem \eqref{3.2.2} can be reformulated as an optimization problem over the product of noncompact Stiefel manifolds, subject to the biorthogonality constraint
\begin{equation}
    \min_{\substack{W,V \in \mathbb{R}_*^{n \times r} \\ W^\top V = I_r}} \{J_1(W,V) = J(W^\top AV, W^\top B, CV, V^\top MV)\}.
    \label{3.2.4}
\end{equation}
It is important to note that \eqref{3.2.4} is a constrained optimization problem defined on the manifold $\mathbb{R}_*^{n \times r} \times \mathbb{R}_*^{n \times r}$.
Due to the presence of the explicit biorthogonality constraint $W^\top V = I_r$, existing Riemannian optimization methods cannot be directly applied to solve this problem.
To this end, we will conduct a further analysis of problem \eqref{3.2.4} in the subsequent sections.
\par

In the Petrov-Galerkin projection framework, the cost function $J_1$ associated with problem \eqref{3.2.2} depends only on the subspaces spanned by the column vectors of the projection matrices $W$ and $V$.
This implies that the solution to this optimization problem is not a unique pair of matrices $(W,V)$, but rather consists of two sets of equivalence classes
\begin{equation}
    [W] := \{WQ \mid Q \in \mathrm{GL}_r\}, \quad [V] := \{VQ \mid Q \in \mathrm{GL}_r\},
    \label{3.2.5}
\end{equation}
where $\mathrm{GL}_r$ denotes the General Linear Group, which is the set of all non-singular $r \times r$ real matrices (i.e., $\mathrm{GL}_r = \{Q \in \mathbb{R}^{r \times r} \mid \det(Q) \neq 0\}$).
Each equivalence class $[W]$ (or $[V]$) corresponds to a unique $r$-dimensional subspace in $\mathbb{R}^n$.
In light of this observation, we proceed to introduce the Grassmann manifold.
\par

The Grassmann manifold, denoted by $\mathrm{Gr}(n,r)$, is defined as the set of all 
$r$-dimensional linear subspace in $\mathbb{R}^n$.
For further details on Grassmann manifolds, the reader is referred to references \cite{Absil2008} \cite{Boumal2023} \cite{Absil2004}.
On the noncompact Stiefel manifold $\mathbb{R}_*^{n \times r}$ (the set of all $n \times r$ real matrices with full column rank), we define an equivalence relation $\sim$ as follows: for any $U_1, U_2 \in \mathbb{R}_*^{n \times r}$,
\begin{equation*}
    U_1 \sim U_2 \iff \mathrm{span}(U_1) = \mathrm{span}(U_2),
    \label{3.2.6}
\end{equation*}
where $\mathrm{span}(\cdot)$ denotes the column space of a matrix.
Under this equivalence relation, and according to reference \cite{Absil2008}, the Grassmann manifold $\mathrm{Gr}(n,r)$ can be identified with the quotient manifold of $\mathbb{R}_*^{n \times r}$ by $\sim$. That is,
\begin{equation*}
    \mathrm{Gr}(n,r) = \mathbb{R}_*^{n \times r} / \sim.
    \label{3.2.7}
\end{equation*}
Each point on $\mathrm{Gr}(n,r)$ thus corresponds to an equivalence class of matrices in $\mathbb{R}_*^{n \times r}$ whose columns span the same $r$-dimensional subspace.
\par

The tangent space to the Grassmann manifold $\mathrm{Gr}(n,r)$ at a point represented by the equivalence class $[U]$ (where $U \in \mathbb{R}_*^{n \times r}$ is a full-rank matrix whose columns span the $r$-dimensional subspace) is denoted by $T_{[U]}\mathrm{Gr}(n,r)$.
According to reference \cite{Absil2008}, this tangent space can be identified with a set of $n \times r$ matrices as follows
\begin{equation*}
    T_{[U]}\mathrm{Gr}(n,r) = \{\xi \in \mathbb{R}^{n \times r} \mid \xi^\top U = \mathbf{0}_{r \times r} \}.
    \label{3.2.8}
\end{equation*}
The orthogonal projection operator $P_U$ onto this representation of the tangent space $T_{[U]}\mathrm{Gr}(n,r)$ is given by \cite{Absil2008} for any $Z \in \mathbb{R}^{n \times r}$ as
\begin{equation}
    P_U(Z) = (I - U(U^\top U)^{-1}U^\top)Z.
    \label{3.2.9}
\end{equation}
The Grassmann manifold $\mathrm{Gr}(n,r)$ is endowed with the standard Riemannian metric, which, using the representative matrix $U$, is defined for any $\xi, \eta \in T_{[U]}\mathrm{Gr}(n,r)$ by
\begin{equation*}
    \langle \xi, \eta \rangle_{[U]} = \mathrm{tr}\left((U^\top U)^{-1}\xi^\top \eta\right).
    \label{3.2.10}
\end{equation*}
With this metric, $\mathrm{Gr}(n,r)$ becomes a Riemannian quotient manifold of the noncompact Stiefel manifold $\mathbb{R}_*^{n \times r}$.
\par

Building upon the introduction of the Grassmann manifold $\mathrm{Gr}(n,r)$, the equivalence classes $[W]$ and $[V]$ defined in \eqref{3.2.5} constitute two elements (i.e., $r$-dimensional subspaces) in $\mathrm{Gr}(n,r)$.
On this basis, problem \eqref{3.2.4} can be transformed into an optimization problem over pairs of elements on the Grassmann manifold
\begin{equation}
    \min_{\substack{[W],[V] \in \mathrm{Gr}(n,r) \\ \text{s.t. } \exists W \in [W], V \in [V] : W^\top V=I_r}} J_1(W,V).
    \label{3.2.11}
\end{equation}
Problem \eqref{3.2.11} optimizes over the equivalence classes $[W]$ and $[V]$ on $\mathrm{Gr}(n,r)$, which equivalently achieves the optimization with respect to the projection matrices $W$ and $V$ as posed in problem \eqref{3.2.4}.
The advantage of analyzing optimization problem \eqref{3.2.11} rather than 
\eqref{3.2.4} lies in the fact that for any pair of subspaces $([W], [V])$ that 
admit biorthogonal bases, we can always select representative matrices $W \in [W]$ 
and $V \in [V]$ that satisfy the constraint $W^\top V = I_r$ for the evaluation of 
$J_1(W,V)$,which can potentially simplify the handling of the biorthogonality 
constraint in the optimization process by focusing the search on the geometric space 
of subspace.
\par

\subsection{Bivariable alternating optimization algorithm}\label{section3.3}
In this subsection, we design optimization algorithms on the Grassmann manifold to solve problem \eqref{3.2.11}.
As a preliminary step, we first compute the partial derivatives of the cost function $J_1(W,V)$ with respect to the elements of $W$ and $V$.
Based on these derivatives, we will then derive the expression for the Riemannian gradient of the cost function on the Grassmann manifold.
\par

We first introduce a proposition that reveals the trace properties of matrices satisfying certain Sylvester equations.
\newtheorem{proposition}{Proposition}
\begin{proposition}
\label{prop1}
If $P$ and $Q$ satisfy $AP+PB+X=0$ and $A^\top Q+QB^\top+Y=0$, then it holds that $\mathrm{tr}(Y^\top P)=\mathrm{tr}(X^\top Q)$.
\end{proposition}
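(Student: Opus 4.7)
The plan is to eliminate $X$ and $Y$ from the two trace expressions using the Sylvester equations as identities, and then reduce both sides to a common expression via the standard trace identities $\mathrm{tr}(M) = \mathrm{tr}(M^\top)$ and the cyclic property $\mathrm{tr}(MN) = \mathrm{tr}(NM)$. The proposition is essentially a duality identity of the Sylvester operator, and no structural assumptions on $A$, $B$, $P$, $Q$ beyond dimensional compatibility are needed.

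Concretely, I would first rearrange the two given equations into the explicit substitutions
\begin{equation*}
X = -AP - PB, \qquad Y = -A^\top Q - QB^\top.
\end{equation*}
Plugging the first into $\mathrm{tr}(X^\top Q)$ yields
\begin{equation*}
\mathrm{tr}(X^\top Q) = -\mathrm{tr}(P^\top A^\top Q) - \mathrm{tr}(B^\top P^\top Q),
\end{equation*}
and plugging the second into $\mathrm{tr}(Y^\top P)$ yields
\begin{equation*}
\mathrm{tr}(Y^\top P) = -\mathrm{tr}(Q^\top A P) - \mathrm{tr}(B Q^\top P).
\end{equation*}

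The final step is to match these term by term. For the first pieces, $\mathrm{tr}(P^\top A^\top Q) = \mathrm{tr}((Q^\top AP)^\top) = \mathrm{tr}(Q^\top AP)$. For the second pieces, using the cyclic property, $\mathrm{tr}(BQ^\top P) = \mathrm{tr}(Q^\top P B)$, and similarly $\mathrm{tr}(B^\top P^\top Q) = \mathrm{tr}((Q^\top P B)^\top) = \mathrm{tr}(Q^\top P B)$. Thus both trace expressions reduce to $-\mathrm{tr}(Q^\top A P) - \mathrm{tr}(Q^\top P B)$, establishing the claim.

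There is no substantive obstacle; the only place requiring attention is bookkeeping of transposes, since $A$, $B$, $P$, $Q$ need not be symmetric and one must be careful not to silently commute factors that do not commute. An alternative and slightly more symmetric phrasing would be to form the scalar $\mathrm{tr}(Q^\top (AP + PB)) + \mathrm{tr}((A^\top Q + Q B^\top)^\top P)$, observe that both halves expand to the same pair of traces (up to sign), and conclude using $X^\top = -(AP+PB)^\top$ and $Y^\top = -(A^\top Q + QB^\top)^\top$.
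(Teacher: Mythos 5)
Your proof is correct. The paper states \cref{prop1} without providing a proof, so there is nothing to compare against; your direct-substitution argument (eliminate $X$ and $Y$ via the Sylvester identities, then reduce both traces to $-\mathrm{tr}(Q^\top AP)-\mathrm{tr}(Q^\top PB)$ using transpose-invariance and cyclicity of the trace) is the standard duality argument for the Sylvester operator, and as you note it needs no spectral assumptions on $A$ and $B$ and no uniqueness of $P$ or $Q$ — only that they satisfy the two equations. The transpose bookkeeping checks out term by term.
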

\par

For the two-variable matrix function $J_1(W,V)$, where $W, V \in \mathbb{R}^{n \times r}$, we define the $n \times r$ matrices $\nabla_W J_1(W,V)$ (denoted as $J_{1,W}$) and $\nabla_V J_1(W,V)$ (denoted as $J_{1,V}$) as the partial derivatives (gradients) of $J_1(W,V)$ with respect to the matrix variables $W$ and $V$, respectively.
The $(i,j)$-th element of these gradient matrices satisfies
\begin{equation*}
    (\nabla_W J_1)_{ij} = \frac{\partial J_1(W,V)}{\partial W_{ij}} \quad \text{and} \quad (\nabla_V J_1)_{ij} = \frac{\partial J_1(W,V)}{\partial V_{ij}},
    \label{3.3.0}
\end{equation*}
where $W_{ij}$ and $V_{ij}$ represent the $(i,j)$-th elements of the matrices $W$ and $V$, respectively, for $i=1,2,\dots,n$ and $j=1,2,\dots,r$.

Let $\Delta_W \in \mathbb{R}^{n \times r}$ and $\Delta_V \in \mathbb{R}^{n \times r}$ be infinitesimal increments (perturbations) to the variables $W$ and $V$, respectively.
Then, according to reference \cite{Zeng2015}, the first-order Taylor expansions of $J_1(W,V)$ are given by
\begin{align}
    J_1(W + \Delta_W, V) &= J_1(W,V) + \mathrm{tr}(\Delta_W^\top \nabla_W J_1(W,V)) + o(\|\Delta_W\|_F), \label{eq:taylor_W} \\
    J_1(W, V + \Delta_V) &= J_1(W,V) + \mathrm{tr}(\Delta_V^\top \nabla_V J_1(W,V)) + o(\|\Delta_V\|_F), \label{eq:taylor_V}
\end{align}
where $\mathrm{tr}(\cdot)$ denotes the trace operator and $o(\|\cdot\|_F)$ represents higher-order terms with respect to the Frobenius norm.
\par

Subsequently, building upon the Taylor expansions for $J_1(W+\Delta_W, V)$ and $J_1(W, V+\Delta_V)$ previously established in \eqref{eq:taylor_W} and \eqref{eq:taylor_V}, we proceed to derive the partial derivatives of the function $J_1(W,V)$ with respect to its matrix variables $W$ and $V$.
With the aid of \cref{prop1}, the following theorem provides the explicit expressions for these partial derivative matrices, $J_{1,W}$ and $J_{1,V}$.
\par

\newtheorem{theorem}{Theorem}
\begin{theorem}
\label{lemma1}
Consider an asymptotically stable LQO system \eqref{2.1}. The partial derivatives of 
the cost function $J_1(W,V)$ associated with problem \eqref{3.2.11} with respect to 
$W$ and $V$, denoted $J_{1,W}$ and $J_{1,V}$ respectively, are given by
\begin{equation*}
	\begin{split}
    J_{1,W} &= 2\left(AV(X^\top K + \hat{P}L) + BB^\top(K+WL)\right),\label{3.3.1} \\
    J_{1,V} &= 2\left(A^\top W(K^\top X + L\hat{P}) + C^\top C(V\hat{P}-X) + 
    2MV(\hat{P}\hat{M}\hat{P} - X^\top M X)\right).
    \end{split}
\end{equation*}
where $X, \hat{P}$ are the solutions to \eqref{3.1.5}, \eqref{3.1.6}, and $K, L$ are the solutions to the following Sylvester equations
\begin{align}
    A^\top K+K\hat{A}-C^\top\hat{C}-2MX\hat{M}=&0,\label{3.3.3}\\
	\hat{A}^\top L+L\hat{A}+\hat{C}^\top\hat{C}+2\hat{M}\hat{P}\hat{M}=&0.\label{3.3.4}
\end{align}
\end{theorem}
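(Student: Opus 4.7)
The plan is to compute the first-order variations of $J_1$ along arbitrary perturbations $\Delta_W$ and $\Delta_V$, match the resulting linear-in-$\Delta$ coefficients against the Taylor expansions \eqref{eq:taylor_W}--\eqref{eq:taylor_V}, and invoke Proposition~\ref{prop1} to eliminate the implicit dependence of $X$ and $\hat{P}$ on $(W,V)$ by trading them for the dual variables $K$ and $L$. I would work from the cost form \eqref{3.1.10}, in which $V$ appears explicitly through $\hat{C}=CV$ and $\hat{M}=V^\top M V$ while $W$ enters only implicitly via $\hat{A},\hat{B}$ inside the Sylvester equations \eqref{3.1.5}--\eqref{3.1.6}.

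For $J_{1,W}$, a perturbation of $W$ gives $\delta\hat{C}=0$ and $\delta\hat{M}=0$, so the entire variation collapses to $\mathrm{tr}(G_X^\top\delta X)+\mathrm{tr}(G_{\hat{P}}\,\delta\hat{P})$ with $G_X=-2(C^\top\hat{C}+2MX\hat{M})$ and $G_{\hat{P}}=\hat{C}^\top\hat{C}+2\hat{M}\hat{P}\hat{M}$; the factors of two arise from the double occurrence of $X$ in $X^\top MX\hat{M}$ and of $\hat{P}$ in $\hat{P}\hat{M}\hat{P}\hat{M}$ combined via symmetry of $M,\hat{M}$ and cyclicity of trace. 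Differentiating \eqref{3.1.5}--\eqref{3.1.6} gives $A\,\delta X+\delta X\,\hat{A}^\top+T_X=0$ and $\hat{A}\,\delta\hat{P}+\delta\hat{P}\,\hat{A}^\top+T_P=0$ for explicit forcing terms $T_X,T_P$ linear in $\Delta_W$. Applying Proposition~\ref{prop1} replaces these implicit variations by duals $Q_X,Q_P$ satisfying $A^\top Q_X+Q_X\hat{A}+G_X=0$ and $\hat{A}^\top Q_P+Q_P\hat{A}+G_{\hat{P}}=0$; comparison with \eqref{3.3.3}--\eqref{3.3.4} identifies $Q_X=2K$ and $Q_P=L$, and expanding $2\,\mathrm{tr}(T_X^\top K)+\mathrm{tr}(T_P L)$ with the symmetry of $\hat{P},L$ collects the variation as $2\,\mathrm{tr}\bigl(\Delta_W^\top\bigl(AV(X^\top K+\hat{P}L)+BB^\top(K+WL)\bigr)\bigr)$, from which the stated $J_{1,W}$ is read off.

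The computation of $J_{1,V}$ follows the same pattern but is enriched by the explicit variations $\delta\hat{C}=C\Delta_V$ and $\delta\hat{M}=\Delta_V^\top MV+V^\top M\Delta_V$. Substituting them into $\delta J_1$ and using symmetry of $M,\hat{M},\hat{P}$ together with the cyclic trace identity produces the direct contributions $2\,\mathrm{tr}\bigl(\Delta_V^\top C^\top C(V\hat{P}-X)\bigr)$ and $4\,\mathrm{tr}\bigl(\Delta_V^\top MV(\hat{P}\hat{M}\hat{P}-X^\top MX)\bigr)$; the latter factor of two comes from the intrinsic symmetrization inside $\delta\hat{M}$. The implicit $\delta X$ and $\delta\hat{P}$ contributions are handled exactly as before, but with $T_X,T_P$ now encoding $\delta\hat{A}=W^\top A\Delta_V$ and $\delta\hat{B}=0$, producing $2\,\mathrm{tr}(\Delta_V^\top A^\top W K^\top X)$ and $2\,\mathrm{tr}(\Delta_V^\top A^\top W L\hat{P})$. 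Summing and factoring out the overall $2$ yields the stated $J_{1,V}$.

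The main obstacle is the careful accounting of symmetry-induced factors of two: separating variations in which a matrix appears only once from those in which it appears twice or whose perturbation is intrinsically symmetric, and then merging the resulting trace pairs via cyclicity together with the symmetry of $L,\hat{P},M,\hat{M}$. It is precisely this bookkeeping that forces the coefficient $2$ on the $MX\hat{M}$ and $\hat{M}\hat{P}\hat{M}$ terms of \eqref{3.3.3}--\eqref{3.3.4} and thereby distinguishes the dual variables $K,L$ from the error-system Gramians $Y,\hat{Q}$ solving \eqref{3.1.7}--\eqref{3.1.8}; once this is handled consistently, the remaining algebra reduces to routine cyclic-trace manipulations.
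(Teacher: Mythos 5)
Your derivation is correct and arrives at exactly the stated $J_{1,W}$, $J_{1,V}$, and the Sylvester equations \eqref{3.3.3}--\eqref{3.3.4}, but it follows a structurally different path from the paper. The paper differentiates the observability-form cost \eqref{3.1.9}, $\mathrm{tr}(B^\top QB+2B^\top Y\hat B+\hat B^\top\hat Q\hat B)$, which forces a two-level adjoint: the perturbations $\Delta_Y,\Delta_{\hat Q}$ are first traded for $X,\hat P$ via Proposition~\ref{prop1}, leaving residual $\Delta_X,\Delta_{\hat P}$ terms that require a second application of the proposition through auxiliary Sylvester solutions $R_1,R_2$ (equations \eqref{3.3.12} and \eqref{3.3.16}), with $K=Y-R_1$ and $L=\hat Q+R_2$ only emerging at the end as combinations. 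You instead differentiate the controllability-form cost \eqref{3.1.10}, which depends on the reduced quantities only through $X,\hat P,\hat C,\hat M$, so a single adjoint layer suffices: the duals of the forcing terms $G_X=-2(C^\top\hat C+2MX\hat M)$ and $G_{\hat P}=\hat C^\top\hat C+2\hat M\hat P\hat M$ are immediately $2K$ and $L$, and the factor-of-two bookkeeping you emphasize (from the quadratic occurrences of $X$ and $\hat P$) is precisely what the paper instead obtains by adding $R_1$ to $-Y$ and $R_2$ to $\hat Q$. Your route is flatter and makes the origin of the coefficient $2$ in \eqref{3.3.3}--\eqref{3.3.4} more transparent; the paper's route keeps $Y$ and $\hat Q$ explicit, which is convenient since those same matrices reappear in the surrogate cost $\mathcal{G}(W)$ of the alternating algorithm. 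One point worth stating explicitly if you write this up: the combination steps that merge trace pairs rely on the symmetry of $L$ (and of $\hat P$, $G_{\hat P}$), which follows from uniqueness of the solution of \eqref{3.3.4} with symmetric forcing when $\hat A$ is Hurwitz; you invoke this symmetry but do not justify it.
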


\begin{proof}
Since the derivation of $J_{1,W}$ is similar to that of $J_{1,V}$, we only present the derivation for $J_{1,W}$ below. The derivation for $J_{1,V}$ follows analogously.

To derive $J_{1,W}$, consider the cost function
\begin{equation*}
    J_1(W,V) = \mathrm{tr}(B^\top Q B + 2B^\top Y B + \hat{B}^\top \hat{Q} \hat{B}).
    \label{3.3.5}
\end{equation*}

Let the first-order differential of $J_1(W,V)$ with respect to $\Delta_W$ be denoted by $\Delta J_1$. Then,
\begin{equation}
    \Delta J_1 = 2\mathrm{tr}(\Delta_W^\top(BB^\top(Y+W\hat{Q}))) + 2\mathrm{tr}(\hat{B}B^\top\Delta_Y) + \mathrm{tr}(\hat{B}\hat{B}^\top\Delta_{\hat{Q}}), \label{3.3.6}
\end{equation}
where $\Delta_Y$ and $\Delta_{\hat{Q}}$ respectively satisfy the equations
\begin{align}
    A^\top \Delta_Y + \Delta_Y \hat{A} + Y \Delta_W^\top AV - M\Delta_X \hat{M} &= 0, \label{3.3.7} \\
    \hat{A}^\top \Delta_{\hat{Q}} + \Delta_{\hat{Q}} \hat{A} + V^\top A^\top \Delta_W \hat{Q} + \hat{Q} \Delta_W^\top AV + \hat{M}\Delta_{\hat{P}}\hat{M} &= 0, \label{3.3.8}
\end{align}
where $\Delta_X$ and $\Delta_{\hat{P}}$ respectively satisfy the equations
\begin{equation}
    A\Delta_X + \Delta_X \hat{A}^\top + XV^\top A^\top \Delta_W + BB^\top \Delta_W = 0, \label{3.3.9}
\end{equation}
\begin{equation}
    \hat{A}\Delta_{\hat{P}} + \Delta_{\hat{P}}\hat{A}^\top + \Delta_W^\top AV\hat{P} + \hat{P}V^\top A^\top\Delta_W + \Delta_W^\top B\hat{B}^\top + \hat{B}B^\top\Delta_W = 0. \label{3.3.10}
\end{equation}
Note that symbols such as $\Delta_Y$, $\Delta_{\hat{Q}}$, $\Delta_X$, and $\Delta_{\hat{P}}$ represent the first-order differentials of the matrices $Y$, $\hat{Q}$, $X$, and $\hat{P}$, respectively, with respect to the perturbation $\Delta_W$.
Using \eqref{3.1.5}, \eqref{3.3.7}, and \cref{prop1}, we obtain
\begin{equation}\label{3.3.11}
\begin{split}
    \mathrm{tr}(\hat{B}B^\top \Delta_Y) &= \mathrm{tr}(X^\top(Y\Delta_W^\top AV - M\Delta_X\hat{M})) \\
    &= \mathrm{tr}(\Delta_W^\top AVX^\top Y) - \mathrm{tr}(\hat{M}X^\top M\Delta_X).
\end{split}
\end{equation}
Construct the Sylvester equation
	\begin{equation}\label{3.3.12}
		A^\top R_1+R_1\hat{A}+MX\hat{M}=0.
	\end{equation}
Then, from \eqref{3.3.12} and \eqref{3.3.9}, we can obtain
\begin{equation}\label{3.3.13}
\begin{split}
    \mathrm{tr}(\hat{M}X^\top M\Delta_X) &= \mathrm{tr}(R_1^\top(XV^\top A^\top\Delta_W + BB^\top\Delta_W)) \\
    &= \mathrm{tr}(\Delta_W^\top(AVX^\top R_1 + BB^\top R_1)).
\end{split}
\end{equation}
Substituting \eqref{3.3.13} into \eqref{3.3.11} and simplifying yields
\begin{equation}
    \mathrm{tr}(\hat{B}B^\top \Delta_Y) = \mathrm{tr}(\Delta_W^\top (AVX^\top(Y-R_1) - BB^\top R_1)). \label{3.3.14}
\end{equation}
Similarly, using \eqref{3.1.6}, \eqref{3.3.8}, and \cref{prop1}, we obtain
\begin{equation}\label{3.3.15}
\begin{split}
    \mathrm{tr}(\hat{B}\hat{B}^\top \Delta_{\hat{Q}}) &= \mathrm{tr}(\hat{P}(V^\top A^\top \Delta_W \hat{Q} + \hat{Q}\Delta_W^\top AV + \hat{M}\Delta_{\hat{P}}\hat{M})) \\
    &= 2\mathrm{tr}(\Delta_W^\top AV\hat{P}\hat{Q}) + \mathrm{tr}(\hat{M}\hat{P}\hat{M}\Delta_{\hat{P}}).
\end{split}
\end{equation}
Construct the Sylvester equation
\begin{equation}\label{3.3.16}
	\hat{A}^\top R_2+R_2\hat{A}+\hat{M}\hat{P}\hat{M}=0.
\end{equation}
Using \eqref{3.3.16}, \eqref{3.3.10}, and \cref{prop1}, we get
\begin{equation}\label{3.3.17}
\begin{split}
    \mathrm{tr}(\hat{M}\hat{P}\hat{M}\Delta_{\hat{P}}) &= \mathrm{tr}(R_2(\Delta_W^\top AV\hat{P} + \hat{P}V^\top A^\top\Delta_W + \Delta_W^\top B\hat{B}^\top + \hat{B}B^\top\Delta_W)) \\
    &= 2\mathrm{tr}(\Delta_W^\top(AV\hat{P}R_2 + BB^\top R_2)).
\end{split}
\end{equation}
Substituting \eqref{3.3.17} into \eqref{3.3.15} yields
\begin{equation}
    \mathrm{tr}(\hat{B}\hat{B}^\top \Delta_{\hat{Q}}) = 2\mathrm{tr}(\Delta_W^\top(AV\hat{P}(\hat{Q} + R_2) + BB^\top R_2)). \label{3.3.18}
\end{equation}
Substituting \eqref{3.3.14} and \eqref{3.3.18} into \eqref{3.3.6} and let $K=Y-R_1$, $L=\hat{Q}+R_2$, and simplifying , we obtain
\begin{equation*}
    \Delta J_1(W,V) = 2\mathrm{tr}(\Delta_W^\top(AV(X^\top K + \hat{P}L) + BB^\top(K + WL))). \label{3.3.19}
\end{equation*}
According to the definition $\Delta J_1(W,V) = \mathrm{tr}(\Delta_W^\top J_{1,W})$, the theorem is proven.
\end{proof}
\par

\cref{lemma1} provides the partial derivatives (Euclidean gradients) of the cost function $J_1(W,V)$ with respect to the variables $W$ and $V$.
Building upon this, and according to \eqref{3.2.9}, the Riemannian gradients of the function $J_1(W,V)$ at points represented by the equivalence classes $[W]$ and $[V]$ on the Grassmann manifold are given respectively by
\begin{equation*}
	\begin{split}
    \nabla_{J_{1,W}}(W,V) &= (I - W(W^\top W)^{-1}W^\top)J_{1,W}W^\top W, \label{3.3.20} \\
    \nabla_{J_{1,V}}(W,V) &= (I - V(V^\top V)^{-1}V^\top)J_{1,V}V^\top V. 
    \end{split}
\end{equation*}
\par

To solve problem \eqref{3.2.11}, we now develop a bivariable alternating optimization algorithm on the Grassmann manifold $\mathrm{Gr}(n,r)$.
The core strategy of this algorithm involves decomposing the main optimization problem into two subproblems within each iteration.
A locally optimal solution is then progressively sought by alternately optimizing one variable while holding the other constant.
The detailed algorithmic steps are presented below.
\par

Let $W_k$ and $V_k$ be the known projection matrices at the $k$-th iteration, for $k = 0, 1, \dots$.
Within the Petrov-Galerkin projection framework, the coefficient matrices of the reduced-order system at the $k$-th iteration are given by
\[
\hat{A}_k = W_k^\top A V_k, \quad \hat{B}_k = W_k^\top B, \quad \hat{C}_k = C V_k, \quad \hat{M}_k = V_k^\top M V_k.
\]
Subsequently, we detail the construction process for the projection matrices $W_{k+1}$ and $V_{k+1}$ at the $(k+1)$-th iteration.
\par

To determine the projection matrix $V_{k+1}$ in the $(k+1)$-th iteration, we first fix the projection matrix $W_k$.
The optimization problem \eqref{3.2.11} then transforms into a nonlinear minimization problem with respect to $V$
\begin{equation}
    \min_{\substack{[V] \in \mathrm{Gr}(n,r) \\ W_k^\top V = I_r}} \{J_1(W_k, V) := J(W_k^\top A V, W_k^\top B, C V, V^\top M V)\}. \label{3.3.22}
\end{equation}
Recall the expression for the cost function $J(\hat{A}, \hat{B}, \hat{C}, \hat{M})$ from \eqref{3.1.10}
\begin{align*}
    J(\hat{A}, \hat{B}, \hat{C}, \hat{M}) ={}& \mathrm{tr}(CPC^\top - 2CX\hat{C}^\top + \hat{C}\hat{P}\hat{C}^\top) \\
    & + \mathrm{tr}(PMPM - 2X^\top M X \hat{M} + \hat{P}\hat{M}\hat{P}\hat{M}),
\end{align*}
The matrices $X$ and $\hat{P}$ in this expression depend nonlinearly on $W_k$ and $V$, making problem \eqref{3.3.22} challenging to solve directly.
To address this, we define an approximation of the cost function $J_1(W_k, V)$, denoted by $\mathcal{F}(V)$
\begin{equation*}
\begin{split}
    \mathcal{F}(V) ={}& \mathrm{tr}(CPC^\top - 2CX_k\hat{C}^\top + \hat{C}\hat{P}_k\hat{C}^\top) \\
    & + \mathrm{tr}(PMPM - 2X_k^\top M X_k \hat{M} + \hat{P}_k\tilde{M}\hat{P}_k\tilde{M}),
\end{split} \label{3.3.23}
\end{equation*}
where $X_k$ and $\hat{P}_k$ are solutions to the following Sylvester equations, respectively
\begin{equation*}
	\begin{split}
    AX_k + X_k\hat{A}_k^\top + B\hat{B}_k^\top &= 0, \label{3.3.24} \\
    \hat{A}_k\hat{P}_k + \hat{P}_k\hat{A}_k^\top + \hat{B}_k\hat{B}_k^\top &= 0. 
    \end{split}
\end{equation*}
By solving the problem
\begin{equation}
    \min_{\substack{[V] \in \mathrm{Gr}(n,r) \\ W_k^\top V = I_r}} \mathcal{F}(V), \label{3.3.26}
\end{equation}
we obtain an approximate solution to the original problem \eqref{3.3.22}.
\par

The approximate problem \eqref{3.3.26} is solved using a line search method.
Analogously, the derivative (Euclidean gradient) of the function $\mathcal{F}(V)$ with respect to $V$, denoted $\mathcal{F}_V$, can be derived as
\[
\mathcal{F}_V = 2 \left( C^\top C(V\hat{P}_k - X_k) + 2MV \left( (V\hat{P}_k - X_k)^\top M (V\hat{P}_k - X_k) \right) \right).
\]
Evidently, if a matrix $\tilde{V}_{k+1}$ satisfies the condition $\tilde{V}_{k+1}\hat{P}_k = X_k$, then the gradient $\mathcal{F}_V$ evaluated at $\tilde{V}_{k+1}$ (denoted $\mathcal{F}_{\tilde{V}_{k+1}}$) is zero.
If $\hat{P}_k$ is non-singular, this condition yields $\tilde{V}_{k+1} = X_k \hat{P}_k^{-1}$.
The search direction $S_k$ is then constructed by orthogonally projecting the difference $\tilde{V}_{k+1} - V_k$ onto the tangent space to $\mathrm{Gr}(n,r)$ at the point represented by $V_k$
\begin{equation}
\begin{split}
    S_k &= P_{[V_k]}(\tilde{V}_{k+1} - V_k) \\
        &= \tilde{V}_{k+1} - V_k(V_k^\top V_k)^{-1}V_k^\top \tilde{V}_{k+1},
\end{split} \label{3.3.27}
\end{equation}
where $P_{[V_k]}$ is the projection operator defined in \eqref{3.2.9}.
\par

Once the search direction $S_k$ is determined, the Armijo rule is employed to compute the iterative step length $t_k$.
To this end, we define a candidate matrix $\mathcal{V}_k(t)$ along the search direction, dependent on a step length $t \ge 0$
\[
\mathcal{V}_k(t) = (V_k + tS_k)(W_k^\top(V_k + tS_k))^{-1}, \quad t \ge 0.
\]
It is clear that $\mathcal{V}_k(t)$, as constructed, satisfies the biorthogonality condition $W_k^\top \mathcal{V}_k(t) = I_r$ with $W_k$.
Using $\mathcal{V}_k(t)$, the Armijo step length $t_k = \omega_1^{m_k} \gamma_1$ is determined via a backtracking strategy, where $\omega_1 \in (0,1)$ is a contraction factor, $\gamma_1 > 0$ is an initial trial step length, and $m_k$ is the smallest non-negative integer such that the following inequalities are satisfied
\begin{align}
    J_1(W_k, V_k) - J_1(W_k, \mathcal{V}_k(\omega_1^{m_k} \gamma_1)) &\ge \alpha_1 
    \omega_1^{m_k} \gamma \langle S_k, \mathcal{F}_{V_k} \rangle_{[V_k]}. 
    \label{3.3.28a}
\end{align}
To ensure the stability of the reduced-order system, $m_k$ is also chosen such that the real part of the trace of the product $W_k^\top A \mathcal{V}_k(\omega_1^{m_k} \gamma_1)$ is negative.
The parameter $\alpha_1 \in (0,1)$ ensures sufficient decrease in the cost function.
Once a suitable $t_k$ satisfying these conditions is found, the projection matrix for the $(k+1)$-th iteration is set as $V_{k+1} = \mathcal{V}_k(t_k)$.
\par

Analogous to the construction of $V_{k+1}$, we now detail the procedure for obtaining the projection matrix $W_{k+1}$.
Assuming $W_k$ (current iterate for $W$) and $V_{k+1}$ (newly computed iterate for $V$) are known, we fix $V_{k+1}$.
Problem \eqref{3.2.11} then becomes an optimization problem with respect to $W$
\begin{equation}
    \min_{\substack{[W] \in \mathrm{Gr}(n,r) \\ W^\top V_{k+1} = I_r}} \{J_1(W, V_{k+1}) := J(W^\top A V_{k+1}, W^\top B, C V_{k+1}, V_{k+1}^\top M V_{k+1})\}. \label{3.3.29}
\end{equation}
To derive an approximate solution to problem \eqref{3.3.29}, we construct the following approximate cost function $\mathcal{G}(W)$, based on \eqref{3.1.9}
\begin{equation*}
    \mathcal{G}(W) = \mathrm{tr}(B^\top Q B + 2B^\top Y_k \hat{B} + \hat{B}^\top \hat{Q}_k \hat{B}), \label{3.3.30}
\end{equation*}
where $\hat{B} = W^\top B$. The matrices $Y_k$ and $\hat{Q}_k$ correspond to the reduced-order system obtained using projection matrices $(W_k, V_{k+1})$ and are computed by solving equations \eqref{3.1.7} and \eqref{3.1.8}, respectively.
An updated $W_{k+1}$ is then found by solving the optimization problem
\begin{equation*}
    \min_{\substack{[W] \in \mathrm{Gr}(n,r) \\ W^\top V_{k+1} = I_r}} \mathcal{G}(W). \label{3.3.31}
\end{equation*}
This yields an approximate solution to the original minimization problem \eqref{3.3.29} for $W$.
\par

The derivative (Euclidean gradient) of the function $\mathcal{G}(W)$ with respect to $W$, denoted $\mathcal{G}_W$, can be derived as
\[
\mathcal{G}_W = 2BB^\top(Y_k + W\hat{Q}_k).
\]
Evidently, if a matrix $\tilde{W}_{k+1}$ satisfies the condition $Y_k + \tilde{W}_{k+1}\hat{Q}_k = 0$, then the gradient $\mathcal{G}_W$ evaluated at $\tilde{W}_{k+1}$ (denoted $\mathcal{G}_{\tilde{W}_{k+1}}$) is zero.
If $\hat{Q}_k$ is non-singular, this condition yields $\tilde{W}_{k+1} = -Y_k(\hat{Q}_k)^{-1}$.
The search direction $S_{W_k}$ is then obtained by orthogonally projecting the difference $\tilde{W}_{k+1} - W_k$ onto the tangent space $T_{[W_k]}\mathrm{Gr}(n,r)$ at the point represented by $W_k$
\begin{equation}
\begin{split}
    S_{W_k} &= P_{[W_k]}(\tilde{W}_{k+1} - W_k) \\
            &= \tilde{W}_{k+1} - W_k(W_k^\top W_k)^{-1}W_k^\top \tilde{W}_{k+1},
\end{split} \label{3.3.32}
\end{equation}
where $P_{[W_k]}$ is the projection operator (defined analogously to $P_{[V_k]}$ in \eqref{3.2.9}).
\par

Once the search direction $S_{W_k}$ from \eqref{3.3.32} is determined, the Armijo rule is applied to compute the step length $t_{W_k}$.
To this end, we define a candidate matrix $\mathcal{W}_k(t)$ dependent on a step length $t \ge 0$
\[
\mathcal{W}_k(t) = (W_k + tS_{W_k})(V_{k+1}^\top(W_k + tS_{W_k}))^{-1}, \quad t \ge 0.
\]
The Armijo step length $t_{W_k} = \omega_2^{n_k} \gamma_2$ is then found via a backtracking line search.
Here, $\omega_2 \in (0,1)$ is a contraction factor, $\gamma_2 > 0$ is an initial trial step length, and $n_k$ is the smallest non-negative integer for which the following conditions are met
\begin{align}
    J_1(W_k, V_{k+1}) - J_1(\mathcal{W}_k(\omega_2^{n_k} \gamma_2), V_{k+1}) &\ge \alpha_2 \omega_2^{n_k} \gamma_2 \langle S_{W_k}, \mathcal{G}_{W_k} \rangle_{[W_k]}, \label{3.3.33a}
\end{align}
Similarly, $n_k$ is also chosen such that the real part of the trace of the product 
$\mathcal{W}_k(\omega_2^{n_k} \gamma_2)^\top A V_{k+1}$ is negative.
The parameter $\alpha_2 \in (0,1)$ ensures sufficient decrease.
Upon finding a suitable $t_{W_k}$, the projection matrix for the $(k+1)$-th iteration is updated as $W_{k+1} = \mathcal{W}_k(t_{W_k})$.
The procedure described completes the update from iteration $k$ to $k+1$ for the bivariable alternating optimization algorithm solving problem \eqref{3.2.11} on the Grassmann manifold $\mathrm{Gr}(n,r)$.
With $W_{k+1}$ and $V_{k+1}$ thus determined, the coefficient matrices of the reduced-order model at the $(k+1)$-th iteration are given by
\[
(\hat{A}_{k+1}, \hat{B}_{k+1}, \hat{C}_{k+1}, \hat{M}_{k+1}) = (W_{k+1}^\top A V_{k+1}, W_{k+1}^\top B, C V_{k+1}, V_{k+1}^\top M V_{k+1}).
\]
\par

The specific steps of the bivariable alternating optimization algorithm for solving the Riemannian optimization problem \eqref{3.2.11} are presented in Algorithm 4.1.

\begin{algorithm}[H]
\caption{Bivariable Alternating Optimization Algorithm for $H_2$-Optimal Model Reduction of LQO Systems on the Grassmann Manifold.}
\label{alg1}
\begin{algorithmic}%[1]
    \STATE \textbf{Input:} LQO system $\Sigma$.
    \STATE \textbf{Output:} Reduced-order LQO system $\hat{\Sigma}$ with matrices $(\hat{A}, \hat{B}, \hat{C}, \hat{M})$.
    \STATE \textbf{Initialize:} Choose initial matrices $W_0, V_0 \in \mathbb{R}^{n \times r}$ such that $W_0^\top V_0 = I_r$.
    \FOR{$k = 0, 1, \dots$ (until convergence)}
        \STATE Compute search direction $S_{V_k}$ using \eqref{3.3.27}.
        \STATE Compute step length $t_{V_k}$ via Armijo rule \eqref{3.3.28a} using $\mathcal{V}_k(t)$.
        \STATE Update $V_{k+1} \leftarrow \mathcal{V}_k(t_{V_k})$.
        \STATE Compute search direction $S_{W_k}$ using \eqref{3.3.32}.
        \STATE Compute step length $t_{W_k}$ via Armijo rule \eqref{3.3.33a} using $\mathcal{W}_k(t)$.
        \STATE Update $W_{k+1} \leftarrow \mathcal{W}_k(t_{W_k})$.
        \STATE Construct $(\hat{A}_{k+1}, \hat{B}_{k+1}, \hat{C}_{k+1}, \hat{M}_{k+1}) \leftarrow (W_{k+1}^\top A V_{k+1}, W_{k+1}^\top B, C V_{k+1}, V_{k+1}^\top M V_{k+1})$.
    \ENDFOR
    \STATE \textbf{Return:} $\hat{A} = W^\top A V, \hat{B} = W^\top B, \hat{C} = C V, \hat{M} = V^\top M V$ (using converged $W, V$).
\end{algorithmic}
\end{algorithm}

\subsection{Riemannian optimization MOR with the guarantee of 
stability}\label{section3.4}
In Algorithm 4.1, the stability of the reduced-order LQO system is ensured through a backtracking line search, which selects a step length satisfying specific stability-enforcing conditions.
Specifically, within each iteration, if a trial step length resulted in a reduced-order system that did not meet the asymptotic stability condition, the step length was adjusted via backtracking.
A key limitation of this approach is its inability to guarantee that a stability-preserving step length can be found at every iteration.
To address this limitation, this section introduces a novel model reduction algorithm for LQO systems within the Petrov-Galerkin projection framework.
By imposing specific constraints on the projection matrices, this new algorithm is 
designed to theoretically ensure the stability of the resulting reduced models 
naturally.
\par

We first revisit the $H_2$-optimal model order reduction (MOR) problem for Linear Quadratic Output (LQO) systems under the Petrov-Galerkin projection framework, as formulated in \eqref{3.2.2}
\[
\min_{\substack{W,V \in \mathbb{R}^{n \times r} \\ W^\top V = I_r}} \{J_1(W,V) := J(\hat{A}, \hat{B}, \hat{C}, \hat{M})\},
\]
where the cost function $J$ is given by
\[
J(\hat{A}, \hat{B}, \hat{C}, \hat{M}) = \mathrm{tr}(B^\top Q B + 2B^\top Y \hat{B} + \hat{B}^\top \hat{Q} \hat{B}).
\]
The reduced-order system matrices $(\hat{A}, \hat{B}, \hat{C}, \hat{M})$ are obtained via Petrov-Galerkin projection as
\[
\hat{A} = W^\top A V, \quad \hat{B} = W^\top B, \quad \hat{C} = C V, \quad \hat{M} = V^\top M V.
\]
The matrices $Y$ and $\hat{Q}$ are solutions to equations \eqref{3.1.7} and 
\eqref{3.1.8}, respectively.
Drawing from the discussion in references \cite{Gugercin2008,Cheng2023}, for the projection matrices $W$ and $V$ in problem \eqref{3.2.2}, we impose the constraint
\[
W^\top = (V^\top H V)^{-1}V^\top H,
\]
where $H$ is an arbitrary symmetric positive definite matrix satisfying the Lyapunov inequality $A^\top H + H A < 0$, with $A$ being the system matrix of the original LQO system.
It can be readily verified that this choice ensures the biorthogonality condition $W^\top V = I_r$.
Furthermore, the reduced-order system matrix $\hat{A}$ is given by
\[
\hat{A} = W^\top A V = (V^\top H V)^{-1}V^\top H A V,
\]
an asymptotically stable matrix \cite{Cheng2023}.
This construction thereby guarantees the asymptotic stability of the reduced-order system.
\par

By substituting the constraint $W^\top = (V^\top H V)^{-1}V^\top H$ into problem \eqref{3.2.2}, the optimization is transformed into a problem solely involving the variable $V$
\begin{equation}
    \min_{\mathrm{rank}(V)=r} \{J_2(V) := J(\hat{A}, \hat{B}, \hat{C}, \hat{M})\}, \label{3.4.1}
\end{equation}
where the coefficient matrices $(\hat{A}, \hat{B}, \hat{C}, \hat{M})$ of the reduced-order model are now computed as
\begin{equation}
\begin{alignedat}{2}
    \hat{A} &= (V^\top H V)^{-1}V^\top H A V, &\quad \hat{B} &= (V^\top H V)^{-1}V^\top H B,\\
    \hat{C} &= C V, &\quad \hat{M} &= V^\top M V.
\end{alignedat} \label{3.4.2}
\end{equation}
\par

Recall from Section 4.1 that the non-compact Stiefel manifold $\mathbb{R}_*^{n \times r}$ denotes the set of all $n \times r$ matrices with full column rank.
Consequently, problem \eqref{3.4.1}, subject to the constraint $\mathrm{rank}(V)=r$, can be reformulated as an optimization problem on the non-compact Stiefel manifold $\mathbb{R}_*^{n \times r}$
\begin{equation}
    \min_{V \in \mathbb{R}_*^{n \times r}} J_2(V). \label{3.4.3}
\end{equation}
\par

For problem \eqref{3.4.3}, the cost function $J_2(V)$ depends only on the column space spanned by $V$.
Considering this invariance, we can impose the orthogonality constraint $V^\top V = I_r$ on problem \eqref{3.4.3} without altering its optimal value.
Consequently, optimization problem \eqref{3.4.3} is equivalent to
\begin{equation}
    \min_{\substack{V \in \mathbb{R}^{n \times r} \\ V^\top V = I_r}} \{J_2(V) := J(\hat{A}, \hat{B}, \hat{C}, \hat{M})\}. \label{3.4.4}
\end{equation}
The constraint set $\{V \in \mathbb{R}^{n \times r} \mid V^\top V = I_r\}$ defines the Stiefel manifold $St(n,r)$.
Problem \eqref{3.4.4} can therefore be reformulated as a Riemannian optimization problem on $St(n,r)$
\begin{equation}
    \min_{V \in St(n,r)} J_2(V). \label{3.4.5}
\end{equation}
\par

Next, we propose to solve the Riemannian optimization problem \eqref{3.4.5} using a Dai-Yuan type conjugate gradient method.
This necessitates deriving the expression for the Riemannian gradient of $J_2(V)$ on $St(n,r)$.
The following theorem provides the Riemannian gradient of the cost function $J_2(V)$ for problem \eqref{3.4.5} on $St(n,r)$.
\par

\begin{theorem}
\label{lemma2}
Consider an asymptotically stable LQO system \eqref{2.1} and its reduced-order 
system \eqref{2.2}.
Assume the coefficient matrices of the reduced-order system satisfy \eqref{3.4.2}, where the projection matrix $V \in St(n,r)$.
Further, assume that the matrices $X, \hat{P}, K, L$ are solutions to equations \eqref{3.1.5}, \eqref{3.1.6}, \eqref{3.3.3}, and \eqref{3.3.4}, respectively.
Then, the Riemannian gradient of the cost function $J_2(V)$ for optimization problem \eqref{3.4.5} on the Stiefel manifold $St(n,r)$ is expressed as
\begin{equation}
    \mathrm{grad}J_2(V) = \mathrm{grad}\bar{J}_2(V) - \frac{1}{2}V(V^\top \mathrm{grad}\bar{J}_2(V) + (\mathrm{grad}\bar{J}_2(V))^\top V), \label{3.4.6}
\end{equation}
where $\bar{J}_2(V)$ denotes a smooth extension of the cost function $J_2(V)$ to the ambient space $\mathbb{R}^{n \times r}$, and $\mathrm{grad}\bar{J}_2(V)$ is the Euclidean gradient of $\bar{J}_2(V)$, computed by
\begin{align*}
    \mathrm{grad}\bar{J}_2(V) = 2(&H(I - VW^\top)\mathcal{F}_1(V)(V^\top HV)^{-1} - W\mathcal{F}_1(V)^\top W + \mathcal{F}_2(V) \\
    & + A^\top W(X^\top K + \hat{P}L)^\top + 2MV(\hat{P}\tilde{M}\hat{P} - X^\top M X)),
\end{align*}
Here, $W = HV(V^\top HV)^{-1}$, and the functions $\mathcal{F}_1(V)$ and $\mathcal{F}_2(V)$ are defined as
\begin{align*}
    \mathcal{F}_1(V) &= AV(X^\top K + \hat{P}L) + BB^\top(K+WL),\\
    \mathcal{F}_2(V) &= C^\top C(V\hat{P} - X).
\end{align*}
\end{theorem}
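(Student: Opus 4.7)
The plan is to derive $\mathrm{grad}\,J_2(V)$ in two stages: first compute the Euclidean gradient of a smooth extension $\bar{J}_2$ of $J_2$ on the ambient space $\mathbb{R}^{n\times r}$, then project it onto $T_V St(n,r)$. For the projection step I would use the standard formula for the Stiefel manifold equipped with the Euclidean metric, namely $\Pi_V(Z) = Z - \tfrac{1}{2}V(V^\top Z + Z^\top V)$, which, applied to $\mathrm{grad}\bar{J}_2(V)$, produces exactly the outer form of the claimed identity \eqref{3.4.6}. Everything therefore reduces to identifying $\mathrm{grad}\bar{J}_2(V)$.

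To compute $\mathrm{grad}\bar{J}_2(V)$ I would invoke \cref{lemma1} rather than redo the Sylvester-equation manipulations from scratch. Freezing $W$ and $V$ as independent variables, \cref{lemma1} supplies the partial gradients $J_{1,W}$ and $J_{1,V}$: one has $J_{1,W} = 2\mathcal{F}_1(V)$ in the notation of the statement, while $J_{1,V}$ already contains $2\mathcal{F}_2(V)$, the term $4MV(\hat{P}\hat{M}\hat{P}-X^\top MX)$, and a piece $2A^\top W(K^\top X + L\hat{P})$ which, by symmetry of $L$ and $\hat{P}$ (readily checked by transposing \eqref{3.3.4}), equals $2A^\top W(X^\top K + \hat{P}L)^\top$. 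Because in problem \eqref{3.4.5} the matrix $W$ is actually the function $W(V) = HV(V^\top HV)^{-1}$, the chain rule yields
$$d\bar{J}_2 \;=\; \mathrm{tr}\bigl(J_{1,V}^\top\, dV\bigr) + \mathrm{tr}\bigl(J_{1,W}^\top\, dW\bigr),$$
whose first summand contributes the last three groups of terms in the claimed expression for $\mathrm{grad}\bar{J}_2(V)$ with the correct factor of $2$.

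The main technical hurdle is processing $\mathrm{tr}(J_{1,W}^\top dW)$. I would differentiate $W = HV(V^\top HV)^{-1}$ via the product rule and the identity $d(Z^{-1}) = -Z^{-1}(dZ)Z^{-1}$ with $Z = V^\top HV$, then collapse the result using $HV(V^\top HV)^{-1} = W$ and $(V^\top HV)^{-1}V^\top H = W^\top$ to obtain
$$dW \;=\; H(I - VW^\top)\,dV\,(V^\top HV)^{-1} \;-\; W\,dV^\top\, W.$$
Inserting this into the trace and applying cyclicity, together with the symmetry of $H$ and of $V^\top HV$ (which also gives the useful identity $H(I-VW^\top) = (I-WV^\top)H$), isolates the coefficient of $dV$ in the first piece as $2H(I-VW^\top)\mathcal{F}_1(V)(V^\top HV)^{-1}$. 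The $dV^\top$-piece is handled by $\mathrm{tr}(A\,dV^\top B) = \mathrm{tr}((BA)^\top dV)$ and yields $-2W\mathcal{F}_1(V)^\top W$. I expect the delicate bookkeeping to lie in keeping the two streams (the explicit-$V$ derivative from $J_{1,V}$ and the implicit-$W$-through-$V$ contribution from $J_{1,W}\,dW$) cleanly separate so that no term is double-counted; once they are summed, the asserted expression for $\mathrm{grad}\bar{J}_2(V)$ emerges, and projecting by $\Pi_V$ completes the proof.
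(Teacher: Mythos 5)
Your proposal is correct, and it reaches the stated formula by a genuinely different route from the paper. The paper's proof does not invoke \cref{lemma1} at all: it redoes the entire directional-derivative computation from scratch, perturbing $Y$, $\hat{Q}$, $X$, $\hat{P}$ in the direction $\xi$, introducing the auxiliary Sylvester solutions $R_1$, $R_2$ of \eqref{3.3.12} and \eqref{3.3.16}, and repeatedly applying \cref{prop1} to convert traces involving $Y'$, $\hat{Q}'$, $X'$, $\hat{P}'$ into traces against $W'$ and $\xi$, before finally setting $K=Y-R_1$, $L=\hat{Q}+R_2$. You instead treat $J_2(V)=J_1(W(V),V)$ and apply the chain rule, importing the already-established partial gradients $J_{1,W}=2\mathcal{F}_1(V)$ and $J_{1,V}$ wholesale; your observation that $L=L^\top$ (by transposing \eqref{3.3.4} and uniqueness) is exactly what is needed to reconcile $A^\top W(K^\top X+L\hat{P})$ from \cref{lemma1} with the transposed form $A^\top W(X^\top K+\hat{P}L)^\top$ in the theorem. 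The remaining ingredients — the expression $\mathrm{D}W[\xi]=H(I-VW^\top)\xi(V^\top HV)^{-1}-W\xi^\top W$, the identity $(I-WV^\top)H=H(I-VW^\top)$, the trace gymnastics isolating $2H(I-VW^\top)\mathcal{F}_1(V^\top HV)^{-1}-2W\mathcal{F}_1^\top W$, and the final Stiefel projection $P_V(D)=D-\tfrac12 V(V^\top D+D^\top V)$ — coincide with the paper's endgame. What your route buys is economy: it avoids duplicating roughly two pages of Sylvester-equation algebra that the paper repeats almost verbatim from the proof of \cref{lemma1}. What it costs is that it leans on the expression for $J_{1,V}$, whose derivation \cref{lemma1} only asserts to be ``analogous'' to that of $J_{1,W}$; the paper's self-contained computation sidesteps that dependence. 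Both are valid.
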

\par

\begin{proof}
From the constraint $W^\top = (V^\top H V)^{-1}V^\top H$, we can readily obtain
\begin{equation}
    V^\top H = (V^\top H V)W^\top. \label{3.4.7}
\end{equation}
Taking the directional derivative of both sides of equation \eqref{3.4.7} with respect to $V$ in an arbitrary direction $\xi \in \mathbb{R}^{n \times r}$ yields
\begin{equation*}
\begin{split}
    \xi^\top H &= \mathrm{D}((V^\top HV)W^\top)[\xi] \\
    &= \mathrm{D}(V^\top HV)[\xi] W^\top + (V^\top HV)W'^\top \\
    &= (\xi^\top HV + V^\top H\xi)W^\top + (V^\top HV)W'^\top,
\end{split} \label{3.4.8}
\end{equation*}
where $W' = \mathrm{D}W[\xi]$ denotes the directional derivative of $W$ (as a function of $V$) in the direction $\xi$.\\
Rearranging the terms gives an expression for $W'^\top$
\begin{equation*}
    W'^\top = (V^\top H V)^{-1} \xi^\top H (I - VW^\top) - W^\top \xi W^\top. \label{3.4.8+}
\end{equation*}
The directional derivative of the function $\bar{J}_2(V)$ in the direction $\xi$ is given by
\begin{equation}
    \mathrm{D}\bar{J}_2(V)[\xi] = 2\mathrm{tr}(W'^\top BB^\top(Y+W\hat{Q})) + 2\mathrm{tr}(\hat{B}^\top Y') + \mathrm{tr}(\hat{B}\hat{B}^\top \hat{Q}'), \label{3.4.9}
\end{equation}
where $Y'=\mathrm{D}Y[\xi]$ and $\hat{Q}'=\mathrm{D}\hat{Q}[\xi]$ are the directional derivatives of the matrices $Y$ and $\hat{Q}$ with respect to $V$ in the direction $\xi$, and they satisfy the equations
\begin{equation}
    A^\top Y' + Y' \hat{A} + YW'^\top AV + YW^\top A\xi - C^\top C\xi -MX'\hat{M} - MX\xi^\top M V - MXV^\top M \xi = 0, \label{3.4.10}
\end{equation}
and\\
\begin{equation}
\begin{split}
    &\hat{A}'^\top \hat{Q}' + \hat{Q}' \hat{A} + \xi^\top A^\top W \hat{Q} + V^\top A^\top W' \hat{Q} + \hat{Q}W'^\top AV \\
    &+ \hat{Q}W^\top A\xi + \xi^\top C^\top \hat{C} + \hat{C}^\top C\xi + \xi^\top M V \hat{M} + V^\top M\xi \hat{M} \\
    &+ \hat{M} P' \hat{M} + \hat{M}\hat{P}\xi^\top M V + \hat{M}\hat{P}V^\top M \xi = 0,
\end{split} \label{3.4.11}
\end{equation}
where $X'$, $P'$, and $\hat{A}'$ are also directional derivatives in the direction $\xi$.\\
Here, $X'=\mathrm{D}X[\xi]$ and $\hat{P}'=\mathrm{D}\hat{P}[\xi]$ are the directional derivatives of $X$ and $\hat{P}$ in the direction $\xi$, satisfying
\begin{equation}
    AX' + X'\hat{A}^\top + X\xi^\top A^\top W + XV^\top A^\top W' + BB^\top W' = 0, \label{3.4.12}
\end{equation}
and\\
\begin{equation}
\begin{split}
    &\hat{A}\hat{P}' + \hat{P}'\hat{A}^\top + W'^\top AV\hat{P} + W^\top A\xi\hat{P} + \hat{P}\xi^\top A^\top W \\
    &+ \hat{P}V^\top A^\top W' + W'^\top B\hat{B}^\top + \hat{B}B^\top W' = 0.
\end{split} \label{3.4.13}
\end{equation}
We now derive expressions for the terms $\mathrm{tr}(\hat{B}B^\top Y')$ and $\mathrm{tr}(\hat{B}\hat{B}^\top \hat{Q}')$ that appear in \eqref{3.4.9}.\\
From \eqref{3.1.5} and \eqref{3.4.10}, and with the aid of \cref{prop1}, it follows that
\begin{equation}
\begin{split}
    \mathrm{tr}(\hat{B}B^\top Y') &= \mathrm{tr}((B\hat{B}^\top)^\top Y') \\
    &= \mathrm{tr}(X^\top(YW'^\top AV + YW^\top A\xi - C^\top C\xi \\
    &\qquad -MX'\hat{M} - MX\xi^\top MV - MXV^\top M\xi)) \\
    &= \mathrm{tr}(W'^\top AVX^\top Y) + \mathrm{tr}(\xi^\top A^\top WY^\top X - C^\top CX \\
    &\qquad - 2MVX^\top MX)) - \mathrm{tr}(\hat{M}X^\top MX').
\end{split} \label{3.4.14}
\end{equation}
From \eqref{3.3.12}, \eqref{3.4.12}, and with the aid of \cref{prop1}, we obtain
\begin{equation}
\begin{split}
    \mathrm{tr}(\hat{M}X^\top M X') &= \mathrm{tr}((MX\hat{M})^\top X') \\
    &= \mathrm{tr}(R_1^\top(X\xi^\top A^\top W + XV^\top A^\top W' + BB^\top W')) \\
    &= \mathrm{tr}(W'^\top(AVX^\top R_1 + BB^\top R_1)) + \mathrm{tr}(\xi^\top A^\top W R_1 X).
\end{split} \label{3.4.15}
\end{equation}
Substituting the result from \eqref{3.4.15} into \eqref{3.4.14} and rearranging terms yields
\begin{equation}
\begin{split}
    \mathrm{tr}(\hat{B}B^\top Y') ={}& \mathrm{tr}(W'^\top(AVX^\top(Y-R_1) - BB^\top R_1)) \\
    & + \mathrm{tr}(\xi^\top(A^\top W(Y-R_1)X - C^\top CX - 2MVX^\top MX)).
\end{split} \label{3.4.16}
\end{equation}
Similarly, from \eqref{3.1.6}, \eqref{3.4.11}, and \cref{prop1}, we have
\begin{equation}
\begin{split}
    \mathrm{tr}(\hat{B}\hat{B}^\top \hat{Q}') ={}& \mathrm{tr}(\hat{P}(\xi^\top A^\top W\hat{Q} + V^\top A^\top W'\hat{Q} + \hat{Q}W'^\top AV \\
    & + \hat{Q}W^\top A\xi + \xi^\top C^\top \hat{C} + \hat{C}^\top C\xi + \xi^\top M V\hat{P}\hat{M} \\
    & + V^\top M\xi\hat{P}\hat{M} + \hat{M}\hat{P}'\hat{M} + \hat{M}\hat{P}\xi^\top MV + \hat{M}\hat{P}V^\top M\xi)) \\
    ={}& 2\mathrm{tr}(W'^\top AV\hat{P}\hat{Q}) + 2\mathrm{tr}(\xi^\top(A^\top W\hat{Q}\hat{P} + C^\top C\hat{P} \\
    & + 2MV\hat{P}\hat{M}\hat{P})) + \mathrm{tr}(\hat{M}\hat{P}\hat{M}P').
\end{split} \label{3.4.17}
\end{equation}
From \eqref{3.3.16} and \eqref{3.4.13}, and in conjunction with \cref{prop1}, it follows that
\begin{equation}
\begin{split}
    \mathrm{tr}(\hat{M}\hat{P}\hat{M}P') &= \mathrm{tr}(R_2(W'^\top AV\hat{P} + W^\top A\xi\hat{P} + \hat{P}\xi^\top A^\top W \\
    &\qquad + \hat{P}V^\top A^\top W' + W'^\top B\hat{B}^\top + \hat{B}B^\top W')) \\
    &= 2\mathrm{tr}(W'^\top(AV\hat{P}R_2 + BB^\top R_2)) + 2\mathrm{tr}(\xi^\top A^\top WR_2\hat{P}).
\end{split} \label{3.4.18}
\end{equation}
Substituting this result \eqref{3.4.18} into \eqref{3.4.17} gives
\begin{equation}
\begin{split}
    \mathrm{tr}(\hat{B}\hat{B}^\top \hat{Q}') ={}& 2\mathrm{tr}(W'^\top(AV\hat{P}(\hat{Q} + R_2) + \hat{B}B^\top R_2)) \\
    & + 2\mathrm{tr}(\xi^\top(A^\top W(\hat{Q} + R_2)\hat{P} + C^\top C\hat{P} + 2MV\hat{P}\hat{M}\hat{P})).
\end{split} \label{3.4.19}
\end{equation}
Finally, substituting the expressions from \eqref{3.4.16} and \eqref{3.4.19} into the directional derivative formula \eqref{3.4.9} and collecting terms, we arrive at
\begin{equation}
\begin{split}
    \mathrm{D}\bar{J}_2(V)[\xi] ={}& 2\mathrm{tr}(W'^\top(AV(X^\top(Y-R_1) + \hat{P}(\hat{Q}+R_2)) + BB^\top(Y-R_1) \\
    & + B\hat{B}^\top(\hat{Q}+R_2))) + 2\mathrm{tr}(\xi^\top(A^\top W((Y-R_1)X+(\hat{Q}+R_2)\hat{P}) \\
    & + C^\top C(V\hat{P}-X) + 2MV(\hat{P}\hat{M}\hat{P} - X^\top MX))).
\end{split} \label{3.4.20}
\end{equation}
Let $K = Y - R_1$ and $L = \hat{Q} + R_2$. The matrices $K$ and $L$ can be shown to satisfy equations \eqref{3.3.3} and \eqref{3.3.4}, respectively.\\
Based on these definitions, equation \eqref{3.4.20} can be rewritten as
\begin{equation*}
\begin{split}
    \mathrm{D}\bar{J}_2(V)[\xi] ={}& 2\mathrm{tr}(W'^\top(AV(X^\top K + \hat{P}L) + BB^\top K + B\hat{B}^\top L)) \\
    & + 2\mathrm{tr}(\xi^\top(A^\top W(X^\top K + \hat{P}L)^\top + C^\top C(V\hat{P}-X) \\
    & + 2MV(\hat{P}\hat{M}\hat{P} - X^\top MX))).
\end{split} \label{3.4.21}
\end{equation*}
The directional derivative $\mathrm{D}\bar{J}_2(V)[\xi]$ can thus be consolidated into the following form
\begin{equation}
\begin{split}
    \mathrm{D}\bar{J}_2(V)[\xi] = 2\mathrm{tr}(\xi^\top(&H(I - VW^\top)\mathcal{F}_1(V)(V^\top HV)^{-1} - W\mathcal{F}_1(V)^\top W \\
    & + \mathcal{F}_2(V) + A^\top W(X^\top K + \hat{P}L)^\top + 2MV(\hat{P}\tilde{M}\hat{P} - X^\top MX)) ),
\end{split} \label{3.4.22}
\end{equation}
where\\
\begin{align*}
    \mathcal{F}_1(V) &= AV(X^\top K + \hat{P}L) + BB^\top(K+WL),\\
    \mathcal{F}_2(V) &= C^\top C(V\hat{P} - X).
\end{align*}
From \eqref{3.4.22}, by the definition of the Euclidean gradient, which satisfies $\mathrm{D}\bar{J}_2(V)[\xi] = \mathrm{tr}(\xi^\top \mathrm{grad}\bar{J}_2(V))$, we can identify the expression for the Euclidean gradient of $\bar{J}_2(V)$ as
\begin{equation*}
\begin{split}
    \mathrm{grad}\bar{J}_2(V) = 2(&H(I - VW^\top)\mathcal{F}_1(V)(V^\top HV)^{-1} - W\mathcal{F}_1(V)^\top W + \mathcal{F}_2(V) \\
    & + A^\top W(X^\top K + \hat{P}L)^\top + 2MV(\hat{P}\tilde{M}\hat{P} - X^\top MX)).
\end{split} \label{3.4.23}
\end{equation*}
With this expression for the Euclidean gradient, and in conjunction with the orthogonal projection onto the tangent space $T_{V}\mathrm{St}(n,r)$
\begin{equation*}
	P_V(D)=D-\frac12V(V^\top D+D^\top V),\quad D\in\mathbb{R}^{n\times r}.
\end{equation*}
the theorem is proven.
\end{proof}
\par

\cref{lemma2} provides the Riemannian gradient of the cost function for problem \eqref{3.4.5} on the Stiefel manifold $St(n,r)$.
Building on this, we can solve problem \eqref{3.4.5} using a Dai-Yuan type Riemannian conjugate gradient method.
We assume the iterate $V_k$ and search direction $\eta_k$ at the $k$-th step are known.
The subsequent iterate $V_{k+1}$ is then computed as follows
\begin{equation}
    V_{k+1} = \mathcal{R}_{V_k}(t_k \eta_k), \label{3.4.24}
\end{equation}
where $\mathcal{R}$ is the retraction operator and $t_k > 0$ is the step length.\\
The search direction $\eta_k$ used in the update step \eqref{3.4.24} is computed by
\begin{equation}
    \eta_k = -\mathrm{grad}J_2(V_k) + \beta_k^{DY} \mathcal{T}_{t_{k-1}\eta_{k-1}}(\eta_{k-1}), \quad k=1, 2, \dots \label{3.4.25}
\end{equation}
where $\mathcal{T}$ is the vector transport operator, and the Riemannian gradient $\mathrm{grad}J_2(V_k)$ is computed using \cref{lemma2}.\\
The Dai-Yuan parameter $\beta_k^{DY}$ is given by the expression
\begin{equation}
    \beta_k^{DY} = \frac{\|\mathrm{grad}J_2(V_k)\|^2_{V_k}}{\langle \mathrm{grad}J_2(V_k), \mathcal{T}_{t_{k-1}\eta_{k-1}}(\eta_{k-1}) \rangle_{V_k} - \langle \mathrm{grad}J_2(V_{k-1}), \eta_{k-1} \rangle_{V_{k-1}}}, \quad k=1,2,\dots \label{3.4.26}
\end{equation}
Here, $\langle\cdot,\cdot\rangle_{V_k}$ denotes the Riemannian metric on the tangent space $T_{V_k}St(n,r)$, and $\|\cdot\|_{V_k}$ is the norm induced by this metric.\\
Furthermore, following the approach in \cite{Sato2016}, we apply the following scaling to the vector transport operator $\mathcal{T}$ used in \eqref{3.4.25} and \eqref{3.4.26}
\begin{equation}
    \tilde{\mathcal{T}}_{t_{k-1}\eta_{k-1}}(\eta_{k-1}) = \min\left\{1, \frac{\|\eta_{k-1}\|_{V_{k-1}}}{\|\mathcal{T}_{t_{k-1}\eta_{k-1}}(\eta_{k-1})\|_{V_k}}\right\} \mathcal{T}_{t_{k-1}\eta_{k-1}}(\eta_{k-1}), \quad k=1, 2, \dots \label{3.4.27}
\end{equation}
This scaling in \eqref{3.4.27} ensures that
\[
\|\tilde{\mathcal{T}}_{t_{k-1}\eta_{k-1}}(\eta_{k-1})\|_{V_k} \le \|\eta_{k-1}\|_{V_{k-1}}.
\]
According to \cite{Sato2016}, this condition improves the convergence properties of the Dai-Yuan type Riemannian conjugate gradient method towards a critical point.
Once the search direction $\eta_k$ is determined, the step length $t_k$ for the Dai-Yuan Riemannian conjugate gradient method is determined by the Wolfe conditions.
Specifically, a step length $t_k > 0$ is chosen to satisfy the following two inequalities
\begin{align}
    J_2(\mathcal{R}_{V_k}(t_k \eta_k)) &\le J_2(V_k) + c_1 t_k \langle \mathrm{grad}J_2(V_k), \eta_k \rangle_{V_k}, \label{3.4.28} \\
    \langle \mathrm{grad}J_2(\mathcal{R}_{V_k}(t_k \eta_k)), \mathcal{T}_{t_k \eta_k}(\eta_k) \rangle_{\mathcal{R}_{V_k}(t_k \eta_k)} &\ge c_2 \langle \mathrm{grad}J_2(V_k), \eta_k \rangle_{V_k}, \label{3.4.29}
\end{align}
with the constants satisfying $0 < c_1 < c_2 < 1$.
To select a suitable step length $t_k$ that also ensures the asymptotic stability of the resulting reduced-order system, we employ a backtracking line search strategy.
Specifically, the step length is set to $t_k = \omega^{m_k}\gamma$, where $\omega \in (0,1)$ is a contraction factor, $\gamma > 0$ is an initial trial step length, and $m_k$ is the smallest non-negative integer for which the Wolfe conditions \eqref{3.4.28}, \eqref{3.4.29}, and the following stability condition are all satisfied
\begin{equation}
    \max_i \Re(\mathrm{eig}(\mathcal{R}_{V_k}(t_k \eta_k)^\top A \mathcal{R}_{V_k}(t_k \eta_k))) < 0, \label{3.4.30}
\end{equation}
where $\Re(\cdot)$ denotes the real part of a complex number and $\mathrm{eig}(\cdot)$ denotes the eigenvalues of a matrix.
The specific steps are outlined in Algorithm 4.2.
\par

\begin{algorithm}[H]
\caption{Stability-Preserving Riemannian Conjugate Gradient Method (SRCG-Stab) for 
$H_2$-Optimal MOR of LQO Systems.}
\label{alg2}
\begin{algorithmic}[1]
    \STATE \textbf{Input:} LQO system $\Sigma$.
    \STATE \textbf{Output:} Reduced-order LQO system $\hat{\Sigma}$ with matrices $(\hat{A}, \hat{B}, \hat{C}, \hat{M})$.
    \STATE \textbf{Initialize:} Choose an initial matrix $V_0 \in St(n,r)$.
    \STATE Compute a symmetric positive definite matrix $H$ satisfying the Lyapunov inequality $A^\top H + H A < 0$.
    \STATE Compute the Riemannian gradient $\mathrm{grad}J_2(V_0)$ using \eqref{3.4.6}.
    \STATE Set the initial search direction $\eta_0 \leftarrow - \mathrm{grad}J_2(V_0)$.
    \FOR{$k = 0, 1, \dots$ (until convergence)}
        \STATE Determine a step length $t_k$ satisfying the line search conditions \eqref{3.4.28}, \eqref{3.4.29}, and \eqref{3.4.30}.
        \STATE Update the iterate: $V_{k+1} \leftarrow \mathcal{R}_{V_k}(t_k \eta_k)$, where $\mathcal{R}_{V_k}$ is a retraction at $V_k$.
        \STATE Compute the gradient at the new iterate: $\mathrm{grad}J_2(V_{k+1})$.
        \STATE Transport the previous search direction to the new tangent space: $\mathcal{T}_{t_k \eta_k}(\eta_k)$, using \eqref{3.4.27}.
        \STATE Compute the Dai-Yuan parameter $\beta_{k+1}^{DY}$ using \eqref{3.4.26}.
        \STATE Update the search direction: $\eta_{k+1} \leftarrow - \mathrm{grad}J_2(V_{k+1}) + \beta_{k+1}^{DY}\mathcal{T}_{t_k \eta_k}(\eta_k)$.
    \ENDFOR
    \STATE \textbf{Return:} $\hat{A}=(V^\top HV)^{-1}V^\top HAV$, $\hat{B}=(V^\top 
    HV)^{-1}V^\top HB$, $\hat{C}=CV$, $\hat{M}=V^\top MV$.
\end{algorithmic}
\end{algorithm}
\par

In practice, one can choose the symmetric positive 
definite matrix $H$ in \cref{alg2} as the unique solution to the Lyapunov 
equation $A^\top H + H A + I_n = O$.

\section{Efficient Computation of the Riemannian Gradient}
\label{section4}

A primary computational bottleneck in the proposed Riemannian optimization algorithms is the need to solve the large-scale Sylvester equations for matrices $X$ and $K$ at each iteration.
Directly solving these $n \times r$ equations, where $n \gg r$, typically incurs a complexity of $O(n^3)$, which can be prohibitive.
To address this challenge, we construct an efficient approximation method based on the integral representations of the Sylvester equation solutions.
\par
The solution to the Sylvester equation for $X$, for instance, has the following integral representation
\[
    X = \int_0^\infty e^{At}B\hat{B}^\top e^{\hat{A}^\top t}\mathrm{d}t.
\]
Our approach is to approximate the matrix exponentials using truncated series of scaled Laguerre functions.
The matrix exponential can be expanded as $e^{At} = \sum_{k=0}^\infty 
A_k\phi_k^\alpha(t)$, where the coefficient matrices $\{A_k\}$ can be computed 
efficiently via the recurrence relation \cite{Xiao2022}
\begin{equation*}
    A_0 = \sqrt{2\alpha}(\alpha I+A)^{-1}, \quad A_i = (A-\alpha I)(A+\alpha I)^{-1} A_{i-1}, \quad i=1,2,\dots
\end{equation*}
By substituting truncated Laguerre expansions for $e^{At}$ and $e^{\hat{A}t}$ into the integral for $X$ and leveraging the orthonormality of the Laguerre functions, the integral can be evaluated analytically.
This procedure yields the following low-rank approximation for $X$
\begin{equation*}
    X \approx F \hat{F}^\top, \label{eq:X_approx}
\end{equation*}
where $F$ and $\hat{F}$ are block matrices containing the first $N$ Laguerre coefficients
\[
    F = \begin{pmatrix} A_0 B & A_1 B & \cdots & A_{N-1}B \end{pmatrix}, \quad
    \hat{F} = \begin{pmatrix} \hat{A}_0 \hat{B} & \hat{A}_1 \hat{B} & \cdots & \hat{A}_{N-1}\hat{B} \end{pmatrix}.
\]
An analogous procedure is applied to the integral form of $K$, yielding a similar low-rank approximation $K \approx -G \hat{G}^\top$, where
\begin{align}
	G&=\left(
	\begin{matrix}
		A_0^\top\begin{pmatrix}C^\top&\sqrt{2}MF\end{pmatrix}
		&A_1^\top\begin{pmatrix}C^\top&\sqrt{2}MF\end{pmatrix}
		&\cdots&A_{N-1}^\top\begin{pmatrix}C^\top&\sqrt{2}MF\end{pmatrix}
	\end{matrix}
	\right),\nonumber\\
	\hat{G}&=\left(
	\begin{matrix}
		\hat{A}_0^\top\begin{pmatrix}\hat{C}^\top&\sqrt{2}\hat{M}\hat{F}\end{pmatrix}
		&\hat{A}_1^\top\begin{pmatrix}\hat{C}^\top&\sqrt{2}\hat{M}\hat{F}\end{pmatrix}
		&\cdots
		&\hat{A}_{N-1}^\top\begin{pmatrix}\hat{C}^\top&\sqrt{2}\hat{M}\hat{F}\end{pmatrix}
	\end{matrix}
	\right).\nonumber
\end{align}
\par

The efficiency of this approach stems from the separation of computations.
The large matrices $F$ and $G$, which depend only on the full-order system, can be pre-computed once offline.
Within the optimization loop, one only needs to re-compute the small matrices $\hat{F}$ and $\hat{G}$, which depend on the current (small-dimensional) iterates.
This replaces the expensive $O(n^3)$ direct solve with efficient updates and matrix 
products of much smaller dimension, drastically reducing the computational workload 
per iterate.

\section{Numerical Experiments}
\label{section5}
This section evaluates the performance of the proposed methods, \cref{alg1} (GAAI) 
and \cref{alg2} (SRCG-Stab), through a numerical example.
All numerical simulations were performed in MATLAB\textsuperscript{\textregistered} 
R2024a (64-bit) on a computer equipped with an 
Intel\textsuperscript{\textregistered} Core\textsuperscript{TM} i9-13980HX 2.2-GHz 
processor, 32 GB of RAM.
To ensure reliable convergence, two stopping criteria are employed for the iterative algorithms.
First, the iteration terminates if the relative norm of the Riemannian gradient 
falls below a prescribed tolerance, where the relative gradient norm is defined as 
the ratio of the norm of the Riemannian gradient at current iterate and the
initial.
Second, the algorithm terminates upon reaching a predefined maximum number of iterations.
\par

This example is adapted from the methodology presented in \cite{Sato2016}, with several modifications to the implementation.
We set the system dimension to $n=300$ and the number of inputs to $m=1$.
The system matrix $A$ is constructed as the sum $A = A_{\mathrm{sym}} + A_{\mathrm{skew}}$, where $A_{\mathrm{sym}}$ is a randomly generated $n \times n$ symmetric negative definite matrix and $A_{\mathrm{skew}}$ is a randomly generated $n \times n$ skew-symmetric matrix.
The input matrix is set to a vector of all ones, $B = (1, 1, \ldots, 1)^\top$.
Similarly, the linear output vector is chosen as $C = (1, 1, \ldots, 1)$, corresponding to the sum of all state components.
For the quadratic part of the output, the identity matrix is used, $M=I$, which is symmetric and positive definite. Despite its simplicity, this choice of $M$ presents a challenging MOR scenario, as the identity matrix is not well-approximable by a low-rank matrix.
With the system matrices defined, a time-varying input signal $u(t) = \exp(\sin(2t))$ is used to excite the system dynamics.
The transient response of the LQO system to this input is then examined over the time interval $[0, 10]$.
\par

The system is reduced to order $r=10$ using the two methods proposed in this paper and two baseline algorithms for comparison.
The baseline methods are Krylov subspace method (denoted as KS, \cite{Antoulas2020}) and structure-preserving balanced truncation method (denoted as BT) from \cite{Benner2021}.
Prior to commencing the iterative process, the initial values are generated by applying the KS method to the original system.
For the GAAI algorithm (\cref{alg1}), the parameters are set as $\alpha_1=0.0001, \omega_1=0.3, \gamma_1=2$ and $\alpha_2=0.0001, \omega_2=0.3, \gamma_2=2$, with the process terminating after a fixed 16 iterations.
Figure \ref{fig1}(a) indicates that the algorithm has converged after 16 iterations, as the objective function value has plateaued and the norm of the Riemannian gradient has been reduced to a negligible level.
For the SRCG-Stab algorithm (\cref{alg2}), the line search parameters are set to $\omega = 0.7, \gamma = 0.01, c_1 = 0.01$, and $c_2 = 0.7$.
The algorithm terminates when the relative norm of the Riemannian gradient falls below $10^{-4}$.
Figure \ref{fig1}(b) illustrates the evolution of the objective function value and the relative gradient norm during the iterative process.
The objective function value decreases monotonically as the number of iterations increases, while the relative gradient norm shows a clear downward trend despite some fluctuations.
\par

\begin{figure}[ht]
	\centering
	\begin{minipage}{0.49\textwidth}
		\centering
		\includegraphics[width=\textwidth]{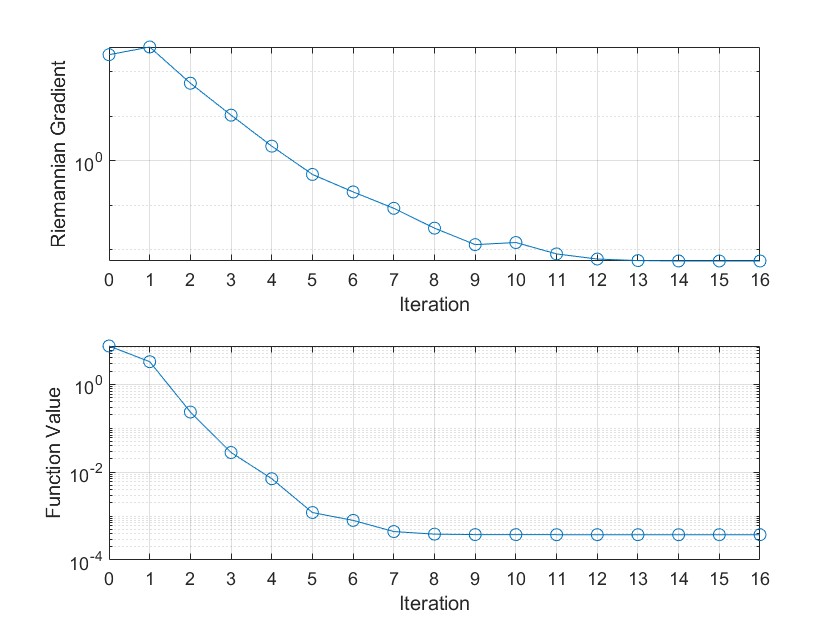}
		\caption*{(a)}
	\end{minipage}
	\hfill
	\begin{minipage}{0.49\textwidth}
		\centering
		\includegraphics[width=\textwidth]{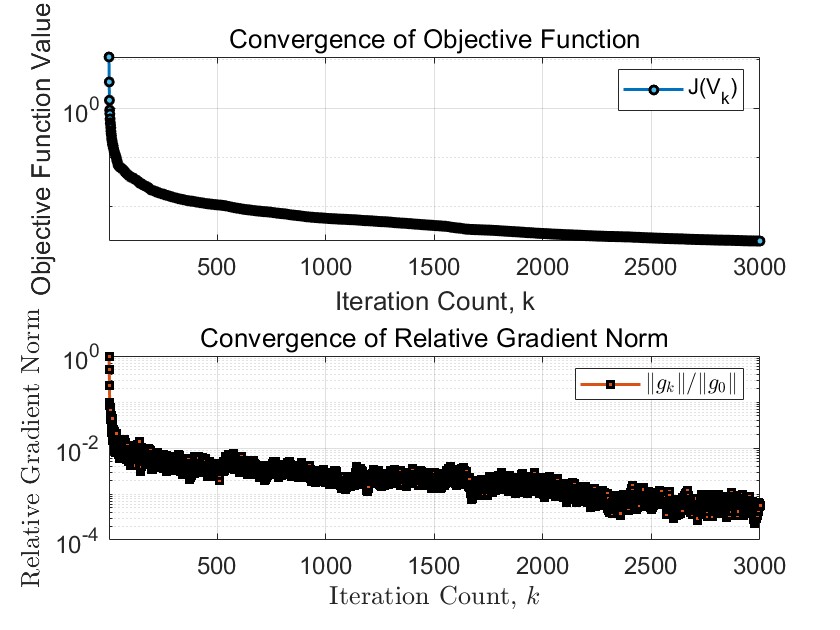}
		\caption*{(b)}
	\end{minipage}
	\caption{(a) The convergence history of the GAAI algorithm. (b) The evolution of the cost function value and the relative gradient norm for the SRCG-Stab algorithm.}
	\label{fig1}
\end{figure}

A time-domain comparison of the reduced-order systems generated by each algorithm against the original LQO system is presented, with the results depicted in Figure \ref{fig2}.
As depicted in the response plots, for this example with a reduced order of $r=10$, the models generated by the BT, GAAI, and SRCG-Stab methods are almost visually indistinguishable from the original system, in contrast to the Krylov subspace method.
This indicates that these three methods effectively capture the dominant dynamic characteristics of the full-order system.
To provide a clearer distinction, Figure \ref{fig2}(b) plots the relative error of each reduced-order model's output.
The GAAI algorithm consistently achieves the lowest relative error across the entire time interval, with a magnitude ranging between $10^{-5}$ and $10^{-4}$.
This result highlights its superior performance, achieving approximately an order of magnitude higher accuracy than the already high-performance BT method.
The SRCG-Stab algorithm yields a relative error of approximately $10^{-2}$. 
While less accurate than GAAI and BT, the SRCG-Stab method is significantly better 
than the Krylov method for this example, along with a  
guarantee of stability in the iteration. 
\par

\begin{figure}[ht]
	\centering
	\begin{minipage}{0.49\textwidth}
		\centering
		\includegraphics[width=\textwidth]{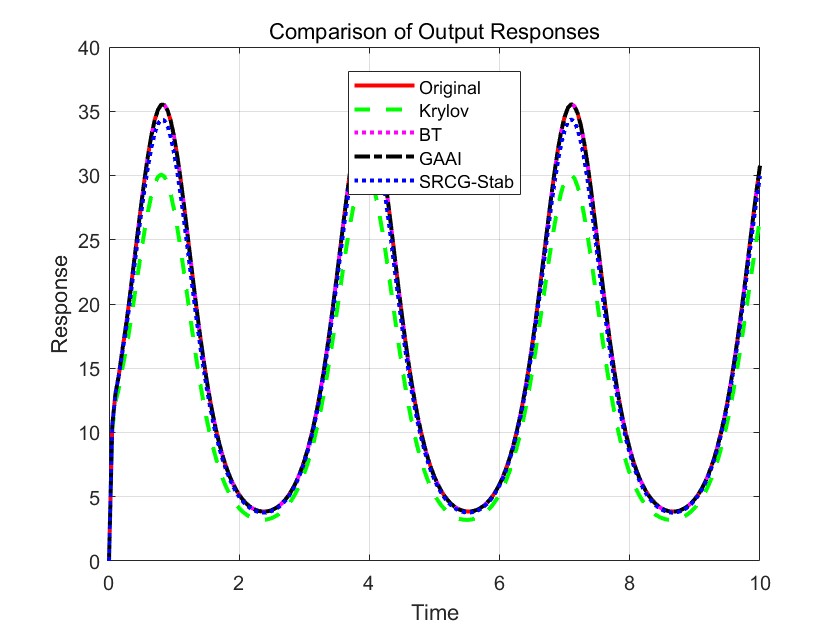}
		\caption*{(a)}
	\end{minipage}
	\hfill
	\begin{minipage}{0.49\textwidth}
		\centering
		\includegraphics[width=\textwidth]{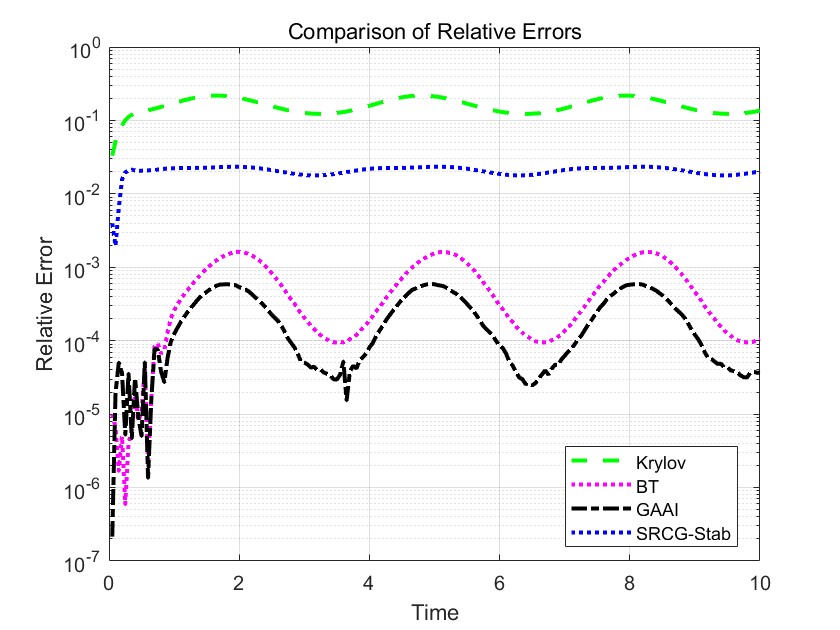}
		\caption*{(b)}
	\end{minipage}
	\caption{(a) Comparison of output responses (b) Comparison of relative errors.}
	\label{fig2}
\end{figure}

\section{Conclusions}
\label{section6}

This paper investigates the $H_2$-optimal MOR of LQO systems within the Petrov-Galerkin projection framework.
Two primary Riemannian optimization-based algorithms are developed.
One formulates the $H_2$ optimal MOR problem as an unconstrained minimization on the 
Grassmann manifold, for which a bivariable alternating optimization algorithm is 
proposed.
The other imposes a specific constraint on the projection matrices, and the 
stability can be guaranteed naturally in the iteration on the Stiefel manifold.
A approximate solver based on the Laguerre expansion improve the efficiency of the 
proposed algorithms dramatically. 
The performance of the two methods is validated through numerical experiments.

%------------------------------------------------------------------------------%
\addcontentsline{toc}{section}{Reference}
\markboth{Reference}{}
\bibliographystyle{elsarticle-num-names}
\bibliography{reference}
%------------------------------------------------------------------------------%

\end{document}